\definecolor{dblue}{rgb}{0,0,0.70}
\newtheorem{lemma}{Lemma}[section]
\newtheorem*{lemma*}{Lemma}
\newtheorem{theorem}[lemma]{Theorem}
\newtheorem{corollary}[lemma]{Corollary}
\newtheorem{proposition}[lemma]{Proposition}
\newtheorem{claim}[lemma]{Claim}
\newtheorem*{question*}{Question}
\theoremstyle{definition}
\newtheorem{definition}[lemma]{Definition}
\newtheorem*{definition*}{Definition}
\newtheorem{question}{Question}
\DeclareMathOperator{\On}{On}
\DeclareMathOperator{\Card}{Card}
\DeclareMathOperator{\Reg}{Reg}
\DeclareMathOperator{\id}{id}
\DeclareMathOperator{\cf}{cf}
\DeclareMathOperator{\ZFC}{\axiomft{ZFC}}
\DeclareMathOperator{\LST}{LST}
\DeclareMathOperator{\NM}{NM}
\DeclareMathOperator{\col}{Col}
\DeclareMathOperator{\height}{ht}
\newcommand{\PP}{\mathbb{P}}
\newcommand{\QQ}{\mathbb{Q}}
\newcommand{\axiomft}[1]{\mathsf{#1}}
\newcommand{\forces}{\mathrel{\Vdash}}
\newcommand{\cof}{\mathrm{Cof}}
\DeclareMathOperator{\Succ}{Succ}
\DeclareMathAlphabet{\mathbbold}{U}{bbold}{m}{n}
\newenvironment{enumerate-(a)}{\begin{enumerate}[label={\upshape (\alph*)}, leftmargin=2pc]}{\end{enumerate}}
\newenvironment{enumerate-(1)}{\begin{enumerate}[label={\upshape (\arabic*)}, leftmargin=2pc]}{\end{enumerate}}
\newenvironment{enumerate-(i)}{\begin{enumerate}[label={\upshape (\roman*)}, leftmargin=2pc]}{\end{enumerate}}
\title{LST Numbers for $Q^\text{e.c.}$ and $I$ style quantifiers}
\author{Christopher Henney-Turner}
\address{Institute of Mathematics of the Polish Academy of Science (IMPAN)
18 Antoniego Abrahama
81-825 Sopot
Poland}
\email{cturner@impan.pl}
\date{\today}
\keywords{L\"owenheim-Skolem-Tarski numbers, H\"artig quantifier, equal cofinality quantifier}
\thanks{This paper is adapted from part of the author's Ph.D. thesis, and he is grateful to his supervisor Philip Welch for his invaluable guidance. He would also like to thank Andrew Brooke-Taylor and Jonathan Osinski for pointing out various glitches in a previous version of the paper. The author was supported in his research by EPSRC grant EP/Student 2123452 and a scholarship from the Heilbronn Institute, and by internal grants from the University of Bristol and the Polish Academy of Science. For the purpose of open access, the author has applied a ‘Creative Commons Attribution' (CC BY) public copyright licence to any Author Accepted Manuscript (AAM) version arising from this submission.} 
\begin{document}
	
	\begin{abstract}
		We introduce two schemes of quantifiers analogous to the H\"artig quantifier $I$ and the Equal Cofinality quantifier $Q^\text{e.c.}$, which tell us about regular cardinals of small Cantor-Bendixson rank. We examine how the L\"owenheim-Skolem-Tarski numbers of these quantifiers interact with one another, and with those of $I$ and $Q^{\text{e.c.}}$. We then find the exact lower bound for each of the $\LST$ numbers, assuming the consistency of supercompacts.
	\end{abstract}
	
\maketitle
	
\section{Introduction}



The L\"owenheim-Skolem theorem famously says that for any first-order language $\mathcal{L}$, any first order $\mathcal{L}$ structure contains an elementary substructure of size less than $\max(\lvert \mathcal{L}\rvert,\omega_1)$. The concept of the L\"owenheim-Skolem-Tarski number generalises this to logics with non-first order elements. The $\LST$ number of a logic is the smallest cardinal $\kappa$ such that every structure contains a substructure of size less than $\kappa$.  The L\"owenheim-Skolem theorem tells us that the $\LST$ number of a first order logic is $\omega_1$, and thus we can think of the $\LST$ number as quantifying how close a chosen logic is to being first order. If we define (non-first order) logical symbols which express a particular concept, then calculating the $\LST$ number of the corresponding logic tells us the complexity of that concept.

In \cite{magidorVaananan}, Magidor and V\"a\"an\"anen investigate $\LST$ numbers for two \textit{generalised quantifiers} which express concepts from set theory: the H\"artig quantifier $I$ (which expresses cardinality, by allowing us to distinguish which of the ordinals are in the class $\Card$ of cardinals), and the equal cofinality quantifier $Q^{\text{e.c.}}$ (which expresses regularity, by letting us distinguish which cardinals are in $\Reg$, the class of regular cardinals). In both cases, the lower bound turns out to be precisely the smallest sort of large cardinal which is not identified by the quantifier:

\begin{theorem}[Magidor,V\"a\"an\"anen]\cite[7, 20 \& 21]{magidorVaananan}
	If it exists, then $\LST(I)$ is at least the first inaccessible cardinal. Similarly, $\LST(I,Q^{\text{e.c.}})$ is at least the first Mahlo cardinal, if it exists. Moreover, if it is consistent that a supercompact cardinal exists then it is also consistent that either one of these $\LST$ numbers is exactly equal to the bound given.
\end{theorem}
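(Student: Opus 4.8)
The plan is to separate the two ZFC lower bounds from the two consistency statements, and to exploit throughout the monotonicity $\LST(I)\le\LST(I,Q^{\text{e.c.}})$, which holds simply because $L(I,Q^{\text{e.c.}})$ extends $L(I)$, so an elementary substructure for the richer logic is \emph{a fortiori} one for the poorer logic, forcing the former's $\LST$ number to be at least the latter's. The engine behind both lower bounds is definability: in any structure with a definable well-order $<^{*}$, the quantifier $I$ defines the class $\Card$ (an element $a$ is a cardinal iff no $<^{*}$-smaller element is equinumerous with it, which $I$ reads off directly), and, as noted in the introduction, $I$ together with $Q^{\text{e.c.}}$ singles out $\Reg$. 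The key technical lemma I would isolate is that if $\mathfrak{A}=(H_\lambda,\in,<^{*})$ for large regular $\lambda$ and $M\prec_{L(I)}\mathfrak{A}$, then the transitive collapse $\pi\colon M\to N$ is \emph{cardinal-correct}: for $a,b\in M$ one has $|a|=|b|\iff|\pi(a)|=|\pi(b)|$, because $I$ pins the cardinality comparison and $N$ is transitive. With $Q^{\text{e.c.}}$ also present the same collapse becomes \emph{cofinality-correct}, hence regular-correct.

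For the first bound, set $\kappa=\LST(I)$; I would show $\kappa$ is a regular limit cardinal. That it is a limit cardinal I would extract from cardinal-correctness applied to a structure in which a largest cardinal below $\kappa$, were $\kappa$ a successor $\mu^{+}$, would have to reflect into the collapse in a way incompatible with $|N|<\kappa$; regularity I would obtain by a reflection argument, feeding a structure that codes a putative cofinal map $\cf(\kappa)\to\kappa$ into the correct-collapse machinery and deriving that this map would compress $\kappa$ to an object of cardinality $<\kappa$. Being a regular limit cardinal, $\kappa$ is inaccessible, so $\LST(I)$ is at least the first inaccessible. For the second bound I would run the analogous argument with $L(I,Q^{\text{e.c.}})$ and the regular-correct collapse: $\LST(I,Q^{\text{e.c.}})$ is already inaccessible by the first bound and monotonicity, and regular-correctness upgrades this to Mahlo-ness, because a club $C\subseteq\kappa$ avoiding regular cardinals could be coded into a structure whose $L(I,Q^{\text{e.c.}})$-elementary substructure would collapse to witness a regular cardinal inside the image of $C$, contradicting the absence of regulars on $C$. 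Thus the regular cardinals below $\kappa$ are stationary and $\kappa$ is at least the first Mahlo cardinal.

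For the consistency statements I would use a supercompact $\kappa$ to supply an upper bound and then force the witness down to the least inaccessible (respectively Mahlo). The upper bound $\LST(I)\le\kappa$, and likewise for $L(I,Q^{\text{e.c.}})$, comes from a standard elementary-embedding argument: given any structure $\mathfrak{A}$, choose $j\colon V\to M=\Ult(V,U)$ with $\crit(j)=\kappa$ and $M$ closed under sequences of length $|\mathfrak{A}|$; then $j[\mathfrak{A}]$ is a substructure of $j(\mathfrak{A})$ of size $<j(\kappa)$, and because $j$ is an isomorphism onto its image and $M$ computes cardinality and cofinality comparisons correctly, $j[\mathfrak{A}]$ is $L(I)$- (and $L(I,Q^{\text{e.c.}})$-) elementary in $j(\mathfrak{A})$; pulling this back through $j$ yields the $\LST$ property at $\kappa$.

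The hard part — and the step I expect to be the main obstacle — is converting this upper bound at a supercompact into \emph{equality at the least inaccessible (respectively Mahlo)}, since the least inaccessible lies far below any supercompact. My plan here is to make the supercompactness of $\kappa$ Laver-indestructible and then force with a collapse tailored to destroy every inaccessible (respectively Mahlo) cardinal below $\kappa$ while keeping $\kappa$ of the relevant type and preserving the reflection responsible for the upper bound; in the extension $\kappa$ is simultaneously the least inaccessible (respectively Mahlo) and still satisfies $\LST(I)\le\kappa$ (respectively $\LST(I,Q^{\text{e.c.}})\le\kappa$), which together with the lower bounds yields the desired equalities. The delicate points to verify are that the collapse genuinely leaves $\kappa$ as the \emph{least} cardinal of the relevant type and that the reflection underwriting the upper bound survives the forcing — in particular that the collapse, which is not $<\kappa$-directed-closed, does not destroy the property it is meant to preserve. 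This preservation, rather than either lower bound, is where the real work lies.
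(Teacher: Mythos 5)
This theorem is quoted from Magidor and V\"a\"an\"anen and is not reproved in the paper, so the fair comparison is with the paper's generalisations: Theorem \ref{theorem LST R minima} (lower bounds via a cardinal-, cofinality- and rank-correct transitive collapse), Theorem \ref{theorem LST maximum} (the supercompact upper bound), and Theorem \ref{theorem LST main result} (consistency of equality). Your two lower-bound arguments and your upper-bound argument are essentially the paper's. One correction to the lower bound: you propose to show that $\kappa=\LST(I)$ is \emph{itself} a regular limit cardinal, but the collapse argument does not deliver that and it is not what is needed. What the argument actually produces is the least ordinal $\epsilon$ moved by the inverse collapse $j:\mathcal{C}\to\mathcal{A}$; one shows $\epsilon$ is a cardinal, not a successor, and not singular, so $\epsilon$ is a regular limit cardinal lying inside a structure of size $<\kappa$, whence there is an inaccessible below $\kappa$ and a fortiori $\kappa$ is at least the first one. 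Trying instead to prove regularity of $\kappa$ itself by ``feeding a cofinal map into the machinery'' is a claim your sketch does not support and is not required. Your Mahlo argument (push a witnessing club of singulars through the collapse and contradict the regularity of $\epsilon$) is the right idea, modulo the same reorientation.

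The genuine gap is the consistency half. ``Laver-indestructibilise $\kappa$ and then collapse away all smaller inaccessibles (resp.\ Mahlos)'' does not work as stated, and you have put your finger on exactly why without resolving it: the Easton-style collapse below $\kappa$ is not ${<}\kappa$-directed-closed, so Laver indestructibility gives you nothing, and you must instead lift $j:V\to M$ to the extension by hand, which means embedding $\PP$ into $j(\PP)$ and manufacturing a $j(\PP)$-generic over $M$ containing $j$ of the ground generic. For the Mahlo case there is a further structural obstruction your plan does not see: $j(\PP)$ must shoot a club of singulars through $j(\kappa)$ whose initial segment is the club already shot through $\kappa$ by $\PP$, and since $\kappa$ is regular in $V^{\PP}$ this forces $j(\PP)$ to \emph{singularise} $\kappa$ first. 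That is why the paper (following Magidor--V\"a\"an\"anen) interleaves a Prikry-type forcing $\QQ_\lambda$ that simultaneously singularises $\lambda$, generates an $\NM_\lambda$-generic club, and names a master condition $H_G$ for the collapse, together with Lemma \ref{lemma difficult magidor lemma} showing the $\NM*\col$ generic can be read off the $\QQ*\col$ generic, and a Gitik-style iteration of the $\QQ_\lambda$'s below $\kappa$. None of this is replaceable by an appeal to indestructibility, and since you yourself locate ``the real work'' at precisely this point, the proposal as written establishes the lower and upper bounds but not the equiconsistency claim.
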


There is a large gap in complexity between a logic which identifies only the cardinals, and one which identifies all of the regulars; as we can see from the fact that the lower bound for $\LST(I,Q^{\text{e.c.}})$ requires an entirely different kind of large cardinal to the lower bound for $\LST(I)$. In fact, the lower bound for $\LST(I)$ is the first inaccessible; that is, the first limit point of $\Reg$. Recalling that $\Card$ is the closure of the class of successors of $\Reg$, this suggests that from the perspective of $\LST$ numbers, the complexity of a class of (some of the) regular cardinals is based on the class's \textit{Cantor-Bendixson rank} -- that is, whether the class contains limits of its elements, and limits of limits, and limits of limits of limits, etc.



In this paper, we test this by finding exact lower bounds for the $\LST$ numbers of logics related to a family of classes $\Reg_{<\alpha}\subset \Reg$ (for $\alpha \in \On$). $\Reg_{<\alpha}$ is the canonical subclass of $\Reg$ of Cantor-Bendixson rank $\alpha$, consisting of all the elements of $\Reg$ of rank less than $\alpha$.

\begin{definition}\label{definition_Reg_alpha}
	For $\alpha\in \On$, we recursively define the classes $\Reg_\alpha$, $\Reg_{\geq \alpha}$ and $\Reg_{<\alpha}$ as follows:
	\begin{itemize}
	\item $\Reg_{<\alpha}:= \bigcup_{\beta<\alpha} \Reg_\beta$
	\item $\Reg_{\geq \alpha}:=\Reg\setminus \Reg_{<\alpha}$
	\item $\Reg_\alpha$ is the class of successors of the club generated by $\Reg_{\geq\alpha}$
\end{itemize}
\end{definition}

So $\Reg_0$ is the class of successor cardinals, $\Reg_1$ is the class of simple inaccessibles, and so on. $\Reg_{<\alpha}$ is a natural choice for a class of Cantor-Bendixson rank $\alpha$: it is definable from $\Reg$ in a canonical way, it includes all uncountable regular cardinals of smaller complexity, and it is ``almost maximal'' among the classes of rank $\alpha$ -- if $\Reg_{<\alpha}\subset C \subset \Reg$ and $C$ has Cantor Bendixson rank $\alpha$, then $C\setminus\Reg_{<\alpha}$ is bounded. Also, note that $\Reg_0$ is the class of successor cardinals (and thus we can easily define the classes $\Reg_0$ and $\Card$ from one another in any reasonable set theoretic language); and that $\Reg_{<\On}:=\bigcup_{\alpha\in \On}\Reg_{<\alpha} = \Reg \setminus \{\omega\}$. The family of $\Reg_{<\alpha}$ are therefore natural intermediate classes between $\Card$ and $\Reg$, when we are working in situations where limits seem to constitute an increase in complexity.

Our first task is to define formal quantifiers which express these classes in the same way that $I$ and $Q^{\text{e.c.}}$ express $\Card$ and $\Reg$ respectively. This is more fiddly than we might expect, since for $\beta<\alpha$ we want the symbol we come up with for $\Reg_{<\beta}$ to be definable from the symbol for $\Reg_{<\alpha}$. This means we have to encode the Cantor-Bendixson ranking into our symbols in a canonical way. We do this by asking for an additional input to the quantifier, which predicts what $\Reg_{\gamma}$ we will find our regular cardinal in. This means that the quantifier we end up with for $\Reg_{<\On}$ encodes more information than $Q^{\text{e.c.}}$. However, we will show that their $\LST$ numbers are always equal, and therefore they are equivalent as far as our investigation into $\LST$ numbers is concerned.

We will actually look at two different schemes of quantifiers. The first, $Q^\alpha$, is essentially a restriction of $Q^{\text{e.c.}}$ (or more precisely, the modified version of $Q^{\text{e.c.}}$ discussed above) to orders with cofinality in $\Reg_{<\alpha}$. The second, $R^\alpha$, is adapted from $I$ and tells us directly whether a set has cardinality in $\Reg_{<\alpha}$. We will compare the $\LST$ numbers of both of these quantifiers for different values of $\alpha$ (with $I$ added to the logic, as is standard) and will find lower bounds for each of them. We will then find a universe in which both of these lower bounds are met. The conclusion we shall reach is:

\begin{theorem}
	Let $\alpha\in \On\setminus\{0\}$. If $\alpha$ is below the first Mahlo cardinal, then $\LST(I,Q^\alpha)$ is at least the first element of $\Reg_\alpha$. If $\alpha$ is below the first hyperinaccessible, then the same is true of $\LST(I,R^\alpha)$. Moreover, for any $\alpha$, if $\kappa>\alpha$ is supercompact and $V\vDash \text{GCH}$ then there is a generic extension in which $\alpha$ is below both of these limits (in fact, countable) and in which $\LST(I,Q^\alpha)=\LST(I,R^\alpha)=\kappa =\min \Reg_\alpha$, so this lower bound is optimal.
\end{theorem}



\section{Preliminaries}

Throughout this paper, we use the concept of a ``generalised quantifier'' developed by Lindstr\"om in \cite{Lindstrom}. The following is a slight simplification of Lindstr\"om's definition, which is equivalent to the full definition for all the quantifiers we will be looking at.

\begin{definition}
	A \textit{type $(n_1,\ldots,n_k)$ generalised quantifier} is a formal symbol $Q$ together with a map $V\rightarrow V$ which takes any set $A$ to some $Q_A \subset \mathcal{P}(A^{n_1})\times \ldots \times \mathcal{P}(A^{n_k})$. If $\mathcal{L}$ is a language (i.e. a set of symbols) which contains $Q$ and some first order symbols, then the (syntactically valid) $\mathcal{L}$ formulas are defined recursively. The usual (recursive) definition is used for atomic formulas, $\vee, \wedge, \neg, \forall$ and $\exists$; but we also allow formulas of the form
	$$Q v_{1,1}, \ldots, v_{1,n_1}, v_{2,1}, \ldots,v_{k,n_k}(\varphi_1 \ldots \varphi_k)$$
	where the $v_{i,j}$ are (not necessarily distinct) variable symbols, the $\varphi_i$ are syntactically valid formulas, and $v_{i,j}$ is a free variable of $\varphi_i$. For ease of notation, let us write $\varphi_i = \varphi_i(v_{i,1}\ldots,v_{i,n_i})$ (supressing any other free variables). If $\mathcal{A}$ is an $\mathcal{L}\setminus\{Q\}$ structure with domain $A$, we extend it to an $\mathcal{L}$ structure by interpreting
	$$ Q v_{1,1}, \ldots, v_{1,n_1}, v_{2,1}, \ldots,v_{k,n_k}(\varphi_1 \ldots \varphi_k)[a]$$
	as true (for a given assignment $a$)\footnote{In order to align with the first-order notation used in set theory when possible, we will commit a minor abuse of notation by wrapping assignments of variables in square brackets when they follow a generalised quantifier, but using round brackets otherwise.} if and only if
	$$(S_1, \ldots, S_k)\in Q_A$$
	where
	$$S_i:=\{(x_1,\ldots,x_{n_i})\in A^{n_i} : \mathcal{A}\vDash\varphi_i(x_1,\ldots,x_{n_1})[a]\}$$
	
	We also extend this definition in the obvious way to languages $\mathcal{L}$ containing more than one generalised quantifier.
\end{definition}

Note that this definition allows us to treat formulas involving $Q$ like any other formulas; for example, we can nest quantifiers inside each other, combine two formulas with $Q$ using $\vee$ or $\wedge$, etc.

The two well-known quantifiers mentioned in the introduction are Lindstr\"om quantifiers:

\begin{definition}
	The H\"artig quantifier $I$ is a type $(1,1)$ quantifier. For any set $A$,
	$$I_A=\{(A,B)\in \mathcal{P}(A) \times \mathcal{P}(A): \lvert A \rvert = \lvert B \rvert\}$$
\end{definition}

Note that the evaluation of cardinality in the above definition takes place in $V$. So

$$I v_{1,1}, v{2,1} (\varphi_1,\varphi_2)[a]$$
is true if and only if the subsets of $\mathcal{A}$ defined by $\varphi_1(v_{1,1})$ and by $\varphi_2(v_{2,1})$ have the same $V$ cardinality.

\begin{definition}
	The equal cofinality quantifier $Q^{\text{e.c.}}$ is a type $(2,2)$ quantifier. For any set $A$,
	$$Q^{\text{e.c.}}_A = \{(A,B) \in \mathcal{P}(A^2) \times \mathcal{P}(A^2): A \text{ and } B \text{ are linear orders with the same cofinality}\}$$
\end{definition}

If $\mathcal{L}$ is a language involving quantifiers and first order symbols, we define elementarity between $\mathcal{L}$ structures in the obvious way. The final standard concept to define is the $\LST$ number.

\begin{definition}
	Let $Q_0,\ldots,Q_n$ be generalised quantifiers. The \textit{L\"owenheim-Skolem-Tarski} number $\LST(Q_0,\ldots,Q_n)$ of $Q_0,\ldots,Q_n$ is the least infinite cardinal $\kappa$ such that the following holds: For every first order vocabulary $\mathcal{L}$ of cardinality less than $\kappa$, for every $\mathcal{L}$ structure $\mathcal{A}$, there is an $\mathcal{L}\cup\{Q_0,\ldots,Q_n\}$ elementary substructure $\mathcal{B}\leq \mathcal{A}$ of size less than $\kappa$. (By this we technically mean that the extension of $\mathcal{B}$ to the language generated by the vocabulary $\mathcal{L}\cup \{Q_0,\ldots,Q_n\}$ is an elementary substructure of the similar extension of $\mathcal{A}$. To avoid repeating these technical details, we will use this phrasing without comment from this point on.)

	If no such $\kappa$ exists, then we say that $\LST(Q_0,\ldots,Q_n)=\infty$ or that it does not exist.
\end{definition}

\section{The new quantifiers}

It is now time to define the quantifiers $Q^\alpha$ and $R^\alpha$ we're going to be looking at. Both are intended to characterise $\Reg_{<\alpha}$. We define $Q^\alpha$ first. It is similar to $Q^{\text{e.c.}}$, but in order to restrict it to $\Reg_{<\alpha}$ in a natural way, we add an extra requirement: as well as requiring that the two linear orders have the same cofinality, we ask for an auxiliary formula deciding the Cantor-Bendixson rank of that cofinality.

\begin{definition}
	Let $\alpha \leq \On$. The quantifier $Q^\alpha$ is type $(2,2,2)$. For any set $A$, we define $Q^\alpha_A$ to be the set of tuples $(X,Y,Z)\in \mathcal{P}(A^2)\times \mathcal{P}(A^2)\times \mathcal{P}(A^2)$ such that:
	
	\begin{enumerate}
		\item $X$ and $Y$ are both linear orders with the same $V$ cofinality;
		\item $Z$ is a well order of order type less than $\alpha$; and
		\item The equal cofinality of $X$ and $Y$ is an element of $\Reg_{\text{o.t.}(Z)}$
	\end{enumerate}
\end{definition}

By adding these two $Z$ clauses, we ensure that for $\beta\leq \alpha\leq \On$, we can naturally define $Q^\beta$ from $Q^\alpha$ and $\beta$. We simply have to restrict $Q^\alpha$ to those cases where the third argument of the quantifier defines a set with order type less than $\beta$. This is something which can be defined universally over any $\mathcal{L}$ structure $\mathcal{A}$ which knows even rudimentary set theory and contains all the ordinals below $\beta$. By contrast, if we defined $Q^\alpha$ in the more immediately obvious way (i.e. as a type $(2,2)$ quantifier which was true of any two linear orders $X$ and $Y$ with equal cofinality in $\Reg_{<\alpha}$) then it would not be possible to define $Q^\beta$ from $Q^\alpha$ and $\beta$ in a given structure, unless it believed enough of $\ZFC$ to recreate Definition \ref{definition_Reg_alpha} from scratch with $\Reg$ replaced by the (definable) class $\Reg_{<\alpha}^V$.

Note that $Q^{\On}$ is $Q^{\text{e.c.}}$ with an additional demand for a well-order $Z$ which witnesses the equal cofinality.\footnote{There's one other small difference: $Q^{\On}$ never tells us that two linear orders have the same cofinality if they happen to both have cofinality $\omega$. But in a moderately rich $\mathcal{L}$ structure, that's not a problem: we can discover whether the linear order has cofinality $\omega$ by comparing it to itself using $Q^{\On}$ with different auxiliary sets $Z$. If $Q^{\On}$ is never true, whatever auxiliary set we use, then we know that the order has cofinality $\omega$. And of course any two orders of cofinality $\omega$ have the same cofinality, so we can define the ``missing'' bit of $Q^{\text{e.c.}}$. The details are left to the reader.} We will shortly see that this demand for a witness does not affect the $\LST$ number of the quantifier, at least in the $\On$ case:

\begin{theorem}
	$\LST(I,Q^{\On})=\LST(I,Q^{\text{e.c.}})$
\end{theorem}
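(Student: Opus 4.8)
The plan is to prove the equality by showing each $\LST$ number bounds the other, and in fact to show the two logics are mutually interpretable in a way that preserves the relevant elementary substructure relation. The key observation, already flagged in the footnote preceding the statement, is that $Q^{\On}$ and $Q^{\text{e.c.}}$ are definable from one another (modulo handling cofinality $\omega$) over any sufficiently rich structure. So my first step would be to reduce to structures $\mathcal{A}$ that are rich enough: given an arbitrary $\mathcal{L}$-structure $\mathcal{A}$ and an arbitrary language $\mathcal{L}$ of size less than the candidate cardinal $\kappa$, I would expand $\mathcal{A}$ to a structure $\mathcal{A}^+$ in a language $\mathcal{L}^+\supseteq \mathcal{L}$ which additionally interprets a fragment of set theory containing all the ordinals below some relevant bound, enough to recognise order types, well-orders, and the ordinal $\omega$. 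Since $\lvert \mathcal{L}^+\rvert=\lvert\mathcal{L}\rvert$ (we only add boundedly many symbols), this expansion does not change which cardinals are below the threshold, and an $\mathcal{L}^+\cup\{I,Q\}$-elementary substructure of $\mathcal{A}^+$ restricts to an $\mathcal{L}\cup\{I,Q\}$-elementary substructure of $\mathcal{A}$ of the same size.

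Next, for the direction $\LST(I,Q^{\text{e.c.}})\leq \LST(I,Q^{\On})$, I would argue that $Q^{\text{e.c.}}$ is definable in the logic $\mathcal{L}^+\cup\{I,Q^{\On}\}$. Over a rich structure, given two definable linear orders $X$ and $Y$, the footnote's recipe lets us express ``$X$ and $Y$ have the same cofinality'' as a formula using $Q^{\On}$: namely, they have the same cofinality iff either both fail to have any common Cantor--Bendixson rank witnessed by $Q^{\On}$ (the cofinality-$\omega$ case, detected by comparing each order to itself with varying auxiliary sets $Z$) or there exists an auxiliary well-order $Z$ for which $Q^{\On}(X,Y,Z)$ holds. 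Since this is a single $\mathcal{L}^+\cup\{Q^{\On}\}$-formula, any substructure that is elementary for $I$ and $Q^{\On}$ is automatically elementary for $I$ and $Q^{\text{e.c.}}$, giving the inequality. The reverse direction $\LST(I,Q^{\On})\leq\LST(I,Q^{\text{e.c.}})$ is the more delicate one, because to define $Q^{\On}(X,Y,Z)$ from $Q^{\text{e.c.}}$ we must verify conditions (1)--(4), and condition (4) -- that the common cofinality lies in $\Reg_{\text{o.t.}(Z)}$ -- refers to the Cantor--Bendixson rank, which is exactly the thing $Q^{\text{e.c.}}$ cannot see directly.

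The main obstacle, then, is condition (4). My strategy here is \emph{not} to define $Q^{\On}$ outright from $Q^{\text{e.c.}}$ over every structure, but rather to exploit the presence of the H\"artig quantifier $I$ together with $Q^{\text{e.c.}}$ and the structure's internal set theory. The idea is that the Cantor--Bendixson rank of a regular cardinal $\lambda$ is an \emph{absolute}, internally computable notion once the structure correctly identifies which of its elements are $V$-regular cardinals and which are $V$-cardinals; and $Q^{\text{e.c.}}$ together with $I$ lets the substructure correctly recognise these. More precisely, I would show that in an $\mathcal{L}^+\cup\{I,Q^{\text{e.c.}}\}$-elementary substructure $\mathcal{B}$ of the rich structure $\mathcal{A}^+$, the restriction to $\mathcal{B}$ of the classes $\Card$, $\Reg$, and hence each $\Reg_\beta$ is correctly computed, so that the auxiliary condition (4) is preserved. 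This amounts to checking that every instance of $Q^{\On}(X,Y,Z)$ reflects between $\mathcal{A}^+$ and $\mathcal{B}$: the cofinality statement reflects because $Q^{\text{e.c.}}$ is available, the well-order and order-type conditions (2) and (3) reflect because they are first-order over the internal set theory, and condition (4) reflects because the structure can internally reconstruct the Cantor--Bendixson hierarchy from the correctly-identified regular cardinals below the relevant point. Assembling these gives that $\mathcal{B}$ is elementary for $Q^{\On}$ as well, completing the reverse inequality and hence the equality.
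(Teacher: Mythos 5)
Your proposal is correct and follows essentially the same route as the paper: expand $\mathcal{A}$ to a set-theoretically rich structure with a predicate for $\mathcal{A}$, use $I$ to pin down the true cardinals and $Q^{\text{e.c.}}$ to pin down the true regulars in both the big structure and the (collapsed) substructure, internally recompute the Cantor--Bendixson hierarchy to recover condition (4), and in the other direction define $Q^{\text{e.c.}}$ from $Q^{\On}$ via the footnote's treatment of the cofinality-$\omega$ case. The only point you leave implicit, which the paper checks explicitly, is that the (transitive collapse of the) substructure actually contains the order types and cofinalities in question -- verified there by using $I$ to show $\On$ of the collapse is a $V$-cardinal above the substructure's size.
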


Thus, we can reasonably say that (for the purposes of $\LST$ numbers) $Q^{\On}$ is just $Q^{\text{e.c.}}$ expressed in a more convenient form. And therefore, that for $\alpha<\On$, the quantifier $Q^\alpha$ that we have introduced is a fragment of $Q^{\text{e.c.}}$.

Before we get into proving the theorem, we finish off our definitions by introducing the second class of quantifiers we're going to be looking at. These tell us only about cardinalities (not cofinalities), and specifically whether a given cardinality is regular. It uses the same auxiliary set technique as $Q^\alpha$.

\begin{definition}
	Let $\alpha \leq \On$. The quantifier $R^\alpha$ is type $(1,2)$. For any set $A$, we define $Q^\alpha_A$ to be the set of tuples $(X,Z)\in \mathcal{P}(A)\times \mathcal{P}(A^2)$ such that:
	
	\begin{enumerate}
		\item $Z$ is a well order of order type less than $\alpha$; and
		\item $\lvert X \rvert \in\Reg_{\text{o.t.}(Z)}$
	\end{enumerate}
\end{definition}

Morally, $R^\alpha$ is true of $X$ if $\lvert X \rvert \in \Reg_{<\alpha}$; but again we also ask for a well order $Z$ witnessing this. We are interested in looking at the $\LST$ numbers of these predicates together with $I$. (Examining $\LST$ numbers of this kind of quantifier in contexts where $I$ is not available is notoriously difficult: little is known even about $\LST(Q^{\text{e.c.}})$.)

\section{Inequalities}

The theorem we stated above is only one of several inequalities we can prove in $\ZFC$.

\begin{theorem}\label{theorem arranging LST numbers}
	In the following statements, if an $\LST$ number does not exist we consider it to be equal to $\infty$. For $\alpha,\beta \in \On\cup \{\On\}$ and $\beta<\alpha$:
	\begin{enumerate}
		\item\label{LST EC equals On} $\LST(I,Q^{\text{e.c.}})=\LST(I,Q^{\On})$
		\item\label{LST Q>R} $\LST(I,Q^\alpha)\geq \LST(I,R^\alpha)$
		\item\label{LST Qalpha>Qbeta} Either $\LST(I,Q^\alpha)\geq \LST(I,Q^\beta)$, or both $\beta \geq \LST(I,Q^\alpha)$ and $\LST(I,Q^\beta)>\min(\Reg_\beta)$.
		\item\label{LST Ralpha>Rbeta} Either $\LST(I,R^\alpha)\geq \LST(I,R^\beta)$, or both $\beta \geq \LST(I,R^\alpha)$ and $\LST(I,R^\beta)>\min(\Reg_\beta)$.
		\item\label{LST R0} $\LST(I,R^1)=\LST(I,R^0)=\LST(I)$.
	\end{enumerate}
\end{theorem}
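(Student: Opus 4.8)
The five statements are all instances of a single principle: an inequality $\LST(I,Q)\le\LST(I,Q')$ follows once the quantifier $Q$ is uniformly definable from $I$ and $Q'$, in the sense that after a harmless expansion every $Q$-formula becomes equivalent to an $\{I,Q'\}$-formula. My plan is therefore to isolate one reduction lemma and then, for each item, supply the relevant definition; the sole exception is the second disjunct of (\ref{LST Qalpha>Qbeta}) and (\ref{LST Ralpha>Rbeta}), which is a genuine lower bound and is where the real work lies. For the reduction lemma I would first record that, at no cost to the $\LST$ number, one may assume every structure carries a global well-ordering $<^*$ of its universe and interprets a weak set theory with access to its ordinals: any structure expands to such a one by adjoining a single relation symbol together with a Mostowski-style coding, and an $\{I,Q'\}$-elementary substructure of the expansion reducts to an $\{I,Q\}$-elementary substructure of the original in the smaller language. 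Granting this, if $Q$ is definable from $I,Q'$ over all such expanded structures using fewer than $\kappa:=\LST(I,Q')$ extra symbols, then taking an $\{I,Q'\}$-elementary substructure of the expansion of size $<\kappa$ and passing to its reduct yields an $\{I,Q\}$-elementary substructure of the original of size $<\kappa$; hence $\LST(I,Q)\le\kappa$.

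For (\ref{LST EC equals On}) I argue mutual definability. The direction $\LST(I,Q^{\text{e.c.}})\le\LST(I,Q^{\On})$ is the footnote's observation made uniform: over a rich structure, $X$ and $Y$ have the same cofinality iff either some auxiliary $Z$ witnesses $Q^{\On}(\varphi,\psi,Z)$, or both have cofinality $\omega$, the latter detected by $Q^{\On}(\varphi,\varphi,Z)$ (resp.\ $\psi$) failing for every well-order $Z$. The reverse direction defines $Q^{\On}$ from $I,Q^{\text{e.c.}}$: using $I$ to name $\Card$ (equivalently $\Reg_0$) and $Q^{\text{e.c.}}$ to name $\Reg$, one defines the classes $\Reg_\gamma$ by transfinite recursion along the given well-order $Z$ of order type $\gamma$, internally to the structure, and then checks that the common cofinality of $X,Y$—recovered as the cardinality, read off by $I$, of a minimal cofinal subset—lies in $\Reg_\gamma$. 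Both definitions use only finitely many extra symbols, so (\ref{LST EC equals On}) follows.

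Item (\ref{LST Q>R}) asks for $R^\alpha$ to be defined from $I,Q^\alpha$, the idea being to convert a cardinality question into a cofinality question. Given $X$, use $<^*$ and $I$ to form the canonical initial well-order $L:=\{y\in X:\lvert\{z\in X:z<^* y\}\rvert<\lvert X\rvert\}$, which has order type exactly the cardinal $\lvert X\rvert$. Then $\lvert X\rvert\in\Reg_{\text{o.t.}(Z)}$ holds iff $\lvert X\rvert$ is regular and $\cf(\lvert X\rvert)\in\Reg_{\text{o.t.}(Z)}$; the second clause is precisely $Q^\alpha(L,L,Z)$, while regularity—the non-existence of a cofinal subset of $L$ of strictly smaller cardinality—is expressible with $I$ over the set-theoretic structure. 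This gives (\ref{LST Q>R}). Statement (\ref{LST R0}) is then immediate from the definitions: the condition $\text{o.t.}(Z)<0$ is unsatisfiable, so $R^0$ is never true and is first-order trivial, giving $\LST(I,R^0)=\LST(I)$; and for $R^1$ the only admissible auxiliary set is $Z=\emptyset$, whence $R^1$ merely tests whether $\lvert X\rvert\in\Reg_0$, i.e.\ whether $\lvert X\rvert$ is a successor cardinal, which is definable from $I$ alone since $\Card$ and the class of successor cardinals are mutually definable. Thus $\LST(I,R^1)\le\LST(I)$, and as the reverse inequalities are trivial we obtain $\LST(I,R^1)=\LST(I,R^0)=\LST(I)$.

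The monotonicity statements (\ref{LST Qalpha>Qbeta}) and (\ref{LST Ralpha>Rbeta}) split according to the size of $\beta$. When $\beta<\LST(I,Q^\alpha)$ I obtain the first disjunct: as already noted in the text, $Q^\beta$ is $Q^\alpha$ restricted to auxiliary sets of order type $<\beta$, so it suffices to adjoin a well-order of length $\beta$ to recognise the condition $\text{o.t.}(Z)<\beta$; this costs $\lvert\beta\rvert<\LST(I,Q^\alpha)$ symbols, and the reduction lemma yields $\LST(I,Q^\alpha)\ge\LST(I,Q^\beta)$. Precisely when $\beta\ge\LST(I,Q^\alpha)$ this reduction provably breaks down, because an elementary substructure of size $<\LST(I,Q^\alpha)\le\beta$ cannot see all of the interval $[0,\beta)$ and so cannot certify $Q^\beta$ for auxiliary sets of large order type; the theorem instead asserts the lower bound $\LST(I,Q^\beta)>\min(\Reg_\beta)$. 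This lower bound I would prove directly, by building a structure—a large initial segment of $V$ naming $\mu:=\min(\Reg_\beta)$—in which $I$ together with $Q^\beta$ forces any elementary substructure to meet $\mu$ cofinally, by reflecting the fact that witnesses to membership in $\Reg_\gamma$ for $\gamma<\beta$ occur unboundedly below $\mu$; such a substructure then has size at least $\mu$, so no substructure of size $<\mu$ exists and $\LST(I,Q^\beta)>\mu$. The hard part, and the main obstacle of the whole theorem, is making this reflection go through uniformly while genuinely exploiting $\beta\ge\LST(I,Q^\alpha)$: since the eventual coincidence $\LST(I,Q^\alpha)=\min(\Reg_\alpha)$ shows that the strict bound cannot hold unconditionally, the argument must locate exactly the extra cofinal structure below $\mu$ that $\beta$ being this large supplies. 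Finally, (\ref{LST Ralpha>Rbeta}) is proved by the identical argument with $R$ in place of $Q$ throughout.
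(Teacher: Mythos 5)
Your treatment of items (\ref{LST EC equals On}), (\ref{LST Q>R}), (\ref{LST R0}) and of the first disjunct of (\ref{LST Qalpha>Qbeta}) and (\ref{LST Ralpha>Rbeta}) is essentially the paper's argument: pass to an expansion with set-theoretic apparatus (the paper glues on $H_{\lambda^+}$, or $\lambda+1$, with a predicate $P$ isolating the original structure, and for the monotonicity case adds $\beta+1$ constants naming the ordinals up to $\beta$), verify via $I$-formulas that a small elementary substructure's transitive collapse is correct about $\Card$, $\Reg$ and the ranks $\Reg_\gamma$ and contains enough ordinals, and then write down the simulating formula. You gloss the verification step (the paper checks, using $I$, that $\On$ intersected with the collapse is a $V$-cardinal above the substructure's size, so that order types of definable well-orders exist inside), but the mechanism is the same.

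The genuine gap is in the second disjunct of (\ref{LST Qalpha>Qbeta}) and (\ref{LST Ralpha>Rbeta}), where you have misread the logical role of the inequality $\LST(I,Q^\beta)>\min(\Reg_\beta)$. It is not a free-standing lower bound to be established by a reflection construction from the hypothesis $\beta\geq \LST(I,Q^\alpha)$ --- and, as you yourself observe, no such unconditional construction can exist, since the paper later forces $\LST(I,Q^\beta)=\min(\Reg_\beta)$. The missing idea is to prove the \emph{contrapositive} of the other conjunct: if $\LST(I,Q^\beta)\leq \min(\Reg_\beta)$, then $\LST(I,Q^\alpha)\geq \LST(I,Q^\beta)$, which together with your first case yields the full disjunction. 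This implication is soft. Assume toward a contradiction $\LST(I,Q^\alpha)<\LST(I,Q^\beta)\leq\min(\Reg_\beta)$. Given any $\mathcal{A}$, take an $\mathcal{L}\cup\{I,Q^\beta\}$-elementary $\mathcal{B}\leq\mathcal{A}$ of size $<\LST(I,Q^\beta)\leq\min(\Reg_\beta)$. Every linear order definable from elements of $\mathcal{B}$ then has cofinality at most $\lvert\mathcal{B}\rvert<\min(\Reg_\beta)$, and every infinite regular cardinal below $\min(\Reg_\beta)$ has Cantor-Bendixson rank $<\beta$ (the least element of the club generated by $\Reg_{\geq\beta}$ is a successor element of that club, so $\min(\Reg_{\geq\beta})=\min(\Reg_\beta)$); hence $Q^\alpha$ and $Q^\beta$ are interpreted identically over $\mathcal{B}$ and all of its substructures. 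Taking an $\mathcal{L}\cup\{I,Q^\alpha\}$-elementary $\mathcal{C}\leq\mathcal{B}$ of size $<\LST(I,Q^\alpha)$ therefore produces an $\mathcal{L}\cup\{I,Q^\beta\}$-elementary substructure of $\mathcal{A}$ of size $<\LST(I,Q^\alpha)<\LST(I,Q^\beta)$, contradicting the minimality of $\LST(I,Q^\beta)$. No structure forcing large substructures is needed anywhere in this theorem; the genuine lower bounds live in Theorem \ref{theorem LST R minima}, under its separate hyperinaccessibility hypothesis. As written, your proposal leaves the second disjunct unproven, and the direct route you sketch for it would fail.
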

We will later see (in Corollary \ref{corollary LST beta alpha inequality works}) that if $\beta<\alpha$, then $\beta <\LST(I,R^\alpha), \LST(I,Q^\alpha)$, unless $\alpha$ is above certain large cardinals. So for $\alpha$ below these large cardinals, the first part of \ref{LST Qalpha>Qbeta} and \ref{LST Ralpha>Rbeta} always holds.

\begin{proof}
	The idea of all these proofs is the same. Although the quantifiers are not definable from one another in an arbitrary language and structure, we show that they \textit{can} be defined from each other in a suitable extension of any language and structure.
	
	Throughout these proofs, we shall use the following formulas (in any language containing the symbols $\in$ and $=$), for use with generalised quantifiers:
	$$\psi_{\in} := v_1\in v_2$$
	$$\psi_{\in}' := v_1\in v_2 \wedge v_1 \in v_3 \wedge v_2\in v_3$$
	
	If $x$ is a variable symbol, we will write $\psi_{\in}(x)$ to denote $\psi_{\in}$ with $v_2$ replaced by $x$, and $\psi_{\in}'(x)$ to denote $\psi_{\in}'$ with $v_3$ replaced by $x$. These formulas define certain useful sets over a given structure, for use with quantifiers. $\psi_\in$ defines the set $v_2$ and $\psi_\in'$ defines the well order $v_3$.
	
	Also, throughout these proofs, let $x,y,z$ denote variable symbols which are not otherwise used in any formulas we are working with.
	
	\begin{proof}[Proof (\ref{LST EC equals On})]
		First, we shall show $\geq$. This is trivial if the left hand side is $\infty$. So suppose that $\LST(I,Q^{\text{e.c.}})=\kappa<\infty$. Let $\mathcal{L}$ be a first order vocabulary of cardinality less than $\kappa$, and let $\mathcal{A}$ be an $\mathcal{L}$ structure of cardinality $\lambda \geq\kappa$. We want to show that $\mathcal{A}$ contains an $\mathcal{L}\cup \{I,Q^{\On}\}$ elementary substructure of size less than $\kappa$. Without loss of generality, assume that $\mathcal{A}$ does not contain any ordinals and is an element of $H_{\lambda^+}$, and that $\mathcal{L}$ does not include the $2$-ary predicate symbol $\in$ or the $1$-ary predicate symbol $P$. Let $\mathcal{L}' = \mathcal{L}\sqcup\{\in, P\}$. Let $\mathcal{A}'$ be the $\mathcal{L}'$ structure with domain $H_{\lambda^+}$, with the interpretation of $\in$ inherited from $V$, the predicate $P$ interpreted as true of elements of $\mathcal{A}$, and all the symbols of $\mathcal{L}$ inherited from $\mathcal{A}$ on the elements of $\mathcal{A}$, and interpreted in some arbitrary way everywhere else.
		
		Let $\mathcal{B}'$ be an $\mathcal{L}'\cup \{I,Q^{\text{e.c.}}\}$ elementary substructure of $\mathcal{A}'$ of cardinality less than $\kappa$ (which exists by assumption), and let $\mathcal{C}'$ be its transitive collapse. Let $j:\mathcal{C}'\rightarrow \mathcal{A}'$ be inverse of the collapsing map, an $\mathcal{L}'\cup \{I,Q^{\text{e.c.}}\}$ elementary embedding. Let
		$$\mathcal{B}:=\{x\in \mathcal{B}': P(x)\}= \mathcal{B}'\cap \mathcal{A}$$
		viewed as an $\mathcal{L}$ structure, and let $\mathcal{C}:=\{x\in \mathcal{C}': P(x)\}$ be the analogous substructure of $\mathcal{C}'$.
		
		
		
		First, we shall prove that $\mathcal{A}'$ and $\mathcal{C}'$ agree on what $\Reg_\alpha$ looks like for all applicable $\alpha$.
		
		\begin{claim}
			
			\begin{enumerate}[label=(\alph*)]
				\item Both $\mathcal{A}'$ and $\mathcal{C}'$ believe a set is an ordinal if and only if it is an ordinal of $V$.\label{Simple LST theorem ord claim}
				\item Both $\mathcal{A}'$ and $\mathcal{C}'$ believe an ordinal is a cardinal if and only if it is a cardinal of $V$.\label{Simple LST theorem card claim}
				\item Both $\mathcal{A}'$ and $\mathcal{C}'$ believe a cardinal is regular if and only if it is a regular cardinal of $V$.\label{Simple LST theorem reg claim}
				\item There is an $\mathcal{L}'\cup \{I,Q^{\text{e.c.}}\}$ formula $\psi_{W}(x,y)$ which both $\mathcal{A}'$ and $\mathcal{C}'$ interpret as true of $(x,y)$ if and only if $y$ is an ordinal and $x\in \Reg_y^V$.\label{Simple LST theorem reg alpha claim}				
				
			\end{enumerate}
		\end{claim}
		\begin{proof}[Proof (Claim, 
			\ref{Simple LST theorem ord claim})] Trivial, since $\mathcal{A}'$ and $\mathcal{C}'$ are transitive.
		\end{proof}
		
		
		
		
		\begin{proof}[Proof (Claim, \ref{Simple LST theorem card claim})] First, notice that since $\mathcal{A}'$ has domain $H_{\lambda^+}$, the cardinals of $\mathcal{A}'$ are precisely the cardinals of $V$ which are in $\mathcal{A}'$.
			
			In both $\mathcal{A}'$ and $\mathcal{C}'$, the $\mathcal{L}'\cup \{I\}$ formula			
			$$\psi_C(x) := x\in \On \wedge \forall y\in x \neg I v_1,v_1 (\psi_{\in}(x),\psi_{\in}(y))$$
			
			\noindent is true of $x$ if and only if $x$ is a cardinal of $V$. So we have a way to ``test'' whether an ordinal is a cardinal. Since $\mathcal{A}$ is correct about cardinals, then,
			$$\mathcal{A}'\vDash \forall x \, x\in \Card \iff \psi_C(x)$$
			
			As usual, ``$x\in \Card$'' is shorthand for the first order formula in the language of set theory which says that $x$ is a cardinal (of $\mathcal{A}$ in this case). Since $j$ is $\mathcal{L}\cup \{I\}$ elementary,
			$$\mathcal{C}'\vDash \forall x \, x\in \Card\iff \psi_C(x)$$
			
			Hence, the cardinals of $\mathcal{C}'$ are precisely the cardinals of $V$ that are in $\mathcal{C}'$.
		\end{proof}
		
		\begin{proof}[Proof (Claim, \ref{Simple LST theorem reg claim})] Similar to the previous case, but instead of $\psi_C$ we use the formula:
			$$\psi_R(x) := \psi_C(x) \wedge \forall y\in x \neg Q^{\text{e.c.}} v_1,v_2,v_1,v_2(\psi_{\in}'(x),\psi_{\in}'(y))$$
			
			\noindent which is true in both $\mathcal{A}'$ and $\mathcal{C}'$ if and only if $x$ is a $V$-regular cardinal. We know that $\mathcal{A}'$ is correct about the regular cardinals because its domain is $H_{\lambda^+}$, so by elementarity $\mathcal{C}'$ is as well.
		\end{proof}
		
		\begin{proof}[Proof (Claim, \ref{Simple LST theorem reg alpha claim})] Since $\Reg^{\mathcal{A}'}$ is a definable subset of $\mathcal{A}'$, and $\mathcal{A}'$ believes enough of $\ZFC$ to do recursion, we know that $\mathcal{A}'$ can recursively calculate the Cantor-Bendixson ranks of elements of $\Reg^{\mathcal{A}'}$ and therefore can calculate $\Reg_{\alpha}^{\mathcal{A}'}$ for all $\alpha\in \mathcal{A}'$. Let $\psi_W(x,y)$ be the (first order) formula which does this. That is, $\mathcal{A}'$ believes that $\psi_W(x,y)$ is true if and only if $y\in \On$ and $x\in \Reg_y^{\mathcal{A}'}$. Moreover, we have also seen that $\Reg^{\mathcal{A}'}=\Reg^V\cap \On^{\mathcal{A'}}$. Since $\mathcal{A}'$ is transitive, it is easy to verify that for all $y\in \On^{\mathcal{A}'}$, $$\Reg_y^{\mathcal{A}'}=\Reg_y^V\cap \On^{\mathcal{A}'}.$$
			
		So $\psi_W$ has the properties we asked for regarding $\mathcal{A}'$. Since $j$ is elementary, $\psi_W$ also calculates $\Reg_\alpha^{\mathcal{C}'}$ when interpreted over $\mathcal{C}'$. Since $\Reg^{\mathcal{C}'}=\Reg^V\cap \On^{\mathcal{C}'}$ and $\mathcal{C}'$ is transitive, the same argument shows that $\psi_W$ has the desired properties for $\mathcal{C}'$ as well.
		\end{proof}
		
		This ends the proof of the claim. Now we show that if $\varphi$ is an $\mathcal{L}\cup \{I,Q^{\On}\}$ formula then we can find an $\mathcal{L}'\cup \{I,Q^{\text{e.c.}}\}$ formula $\psi$ such that for any assignment $a$ of variables to elements of $\mathcal{A} \subset \mathcal{A}'$,
		\begin{equation}\tag{$\star$}
			\mathcal{A}\vDash \varphi[a]\iff \mathcal{A}' \vDash \psi[a]		\end{equation}
		
		and likewise for $\mathcal{C}'$. We use induction on the length of $\varphi$. The only difficult case is the quantifiers. Even then, $\forall$, $\exists$ and $I$ are easy, since they are in the target language, and we can use $P$ to restrict the variables we quantify over to elements of $\mathcal{A}$ and $\mathcal{C}$. So the only case to deal with is when $\varphi$ has the form
		
		$$\varphi=Q^{\On}v_{i_1},\ldots,v_{i_6} (\varphi_1,\varphi_2,\varphi_3)$$
		
		If $i_1=i_2$, $i_3=i_4$ or $i_5=i_6$ then $\varphi$ is trivial. And it makes no difference whether any other variables used are the same. So without loss of generality, we can simplify notation by assuming that $i_1=1,i_2=2,i_3=1,i_4=2,i_5=1,i_6=2$. 
		
		Let $\psi_k$ be the $\mathcal{L}\cup \{I,Q^{\text{e.c.}}\}$ formula corresponding to $\varphi_k$, which we already have by the inductive hypothesis. Without loss of generality, we can assume that $v_3$ is not a free variable of $\varphi_1$, $\varphi_2$, $\varphi_3$, $\psi_1$, $\psi_2$ and $\psi_3$. To simplify notation, we will write $\psi_k(y,z)$ to denote $\psi_k$ with $y$ and $z$ substituted for $v_1$ and $v_2$ respectively. Let

		$$\chi(v_3):=v_3\in \On \wedge \exists x \big(\forall y \forall z \psi_3(y,z)\iff (y,z)\in x\big) \wedge x \text{ is a well order } \wedge \text{o.t.}(x)=v_3$$
		
		
		Unpicking this, $\psi_1$ says that $\psi_3$ defines a well order, and that well order is an element of the structure, and it has order type $v_3$.
		
		\begin{claim}
			Let $a$ be an assignment of variables, other than $x$ and $y$, to elements of $\mathcal{A}$ (resp. $\mathcal{C}$). If $\psi_3(y,z)[a]$ defines a linear order over $\mathcal{A}$ (resp. $\mathcal{C}'$) then that linear order will be an element of $\mathcal{A}'$ (resp. $\mathcal{C}'$). If the linear order is a well order, then the ordinal corresponding to its order type will also be in $\mathcal{A}'$ (resp. $\mathcal{C}'$). 
		\end{claim}
		\begin{proof}
			For $\mathcal{A}$, this is immediate, because the set defined by $\psi_3$ is a subset of $\mathcal{A}^2$, and we know the domain of $\mathcal{A}'$ is $H_{\lambda^+}$ and the domain of $\mathcal{A}$ has size $\lambda$.
			
			Suppose that $\psi_3(y,z)[a]$ defines a linear order over $\mathcal{C}'$. Then by $\mathcal{L}'\cup \{I,Q^{\text{e.c.}}\}$ elementarity of $j$, $\psi_3(y,z)[j(a)]$ defines a linear order over $\mathcal{A}'$. Also, since $a$ assigns variables to elements of $\mathcal{C}$, we know $j(a)$ assigns variables to elements of $\mathcal{A}$. So by the previous paragraph, the linear order defined by $\psi_3(y,z)[j(a)]$ is in $\mathcal{A}'$. But then by elementarity of $j$, the linear order defined by $\psi_3(y,z)[a]$ is in $\mathcal{C}'$. The well order statement is similar.
		\end{proof}

		Hence, for a given assignment $a$ (which omits $v_4,v_5$ to simplify notation) both $\mathcal{A}'$ and $\mathcal{C}'$ believe $\chi(v_3)[a]$ if and only if the set defined by $\varphi_3[a]$ is a well order of order type $v_3$. We now define
			\begin{align*}
				\psi :=	
				& \exists v_3 \in \On \chi(v_3) \wedge  Q^{\text{e.c.}}v_1,v_2,v_1,v_2(\psi_1,\psi_2)\wedge\\
				&\wedge \exists x \in \On \psi_W(v_3,x) \wedge Q^{\text{e.c.}}v_1,v_2,v_1,v_2(\psi_1, \psi_{\in}'(x))
			\end{align*}
		
		Verifying that $\psi$ has the desired property is now an easy definition chase. Fix an assignment $a$ of variables (other than $v_1$ and $v_2$) to elements of $\mathcal{A}$. Let $X,Y,Z\subset \mathcal{A}$ be the objects defined by $\varphi_1[a]$, $\varphi_2[A]$ and $\varphi_3[a]$ over $\mathcal{A}$, which by inductive hypothesis are also defined by $\psi_1[a]$, $\psi_2[a]$ and $\psi_3[a]$ over $\mathcal{A}'$.
		\begin{align*}
			\mathcal{A}\vDash\varphi[a] \iff V\vDash& Z \text{ is a well order and } X \text{ and } Y \text{ have equal cofinality in } \Reg_{\text{o.t.}(Z)}\\
			\iff V\vDash&\exists \beta \in \On^{\mathcal{A}'} \text{o.t.}(Z)=\beta \wedge \cof(X)=\cof(Y) \wedge\\
			&\wedge \exists \gamma \in \Reg_\beta^V \cof(X)=\gamma=\cof(\gamma)\\
			\iff \mathcal{A}'\vDash& \psi
		\end{align*}
		
		The $\mathcal{C}$ case is identical.
		
		But we know that $j$ is elementary, so for any assignment $a$ of variables to $\mathcal{C}$,
		
		\begin{align*}
			\mathcal{C}\vDash\varphi[a] &\iff \mathcal{C}' \vDash \psi[a]\\
			&\iff \mathcal{A}' \vDash \psi[j(a)]\\
			&\iff \mathcal{A} \vDash \varphi[j(a)]
		\end{align*}
		
		Hence, $j\upharpoonleft \mathcal{C}$ is an $\mathcal{L}\cup \{I,Q^{\On}\}$ elementary embedding from $\mathcal{C}$ to $\mathcal{A}$. Hence, $\kappa \geq \LST(I,Q^{\On})$.

		Now we show $\LST(I,Q^{\text{e.c.}})\leq \LST(I,Q^{\On})$. This uses a similar trick, but the technique is much simpler. Let $\mathcal{L}$ be a first order language of size less than $\kappa$, and let $\mathcal{A}$ be an $\mathcal{L}$ structure of cardinality $\lambda>\kappa=\LST(I,Q^{\On})$. Without loss of generality, assume $\mathcal{A}$ contains no ordinals, and $<$ is not an element of $\mathcal{L}$. Let $\mathcal{A}'$ be the $\mathcal{L}'=\mathcal{L}\cup\{\in\}$ structure $\mathcal{A}\sqcup (\lambda+1,\in)$, where $\in$ is interpreted as true only on pairs of ordinals and not on elements of $\mathcal{A}$. (Note that $\mathcal{L}'$ can distinguish between the elements of $\mathcal{A}$ and the ordinals in $\mathcal{A}'$ by using $\in$, without the need for the extra predicate $P$ we used before.) Let $\mathcal{B}'$ be an $\mathcal{L}'\cup\{I,Q^{\On}\}$ elementary substructure of $\mathcal{A}'$ of size less than $\kappa$, and let $\mathcal{B}=\mathcal{B}'\cap \mathcal{A}$.
		
		Now, $\mathcal{A}'$ believes: ``The cardinality of $\mathcal{A}$ is equal to the cardinality of the set of ordinals below my largest ordinal.'' This statement can easily be expressed in $\mathcal{L}'\cup \{I\}$, and hence $\mathcal{B}'$ believes the corresponding statement about $\mathcal{B}$. Hence, if $\beta \leq \lvert \mathcal{B}\rvert$, then we can find some ordinal $\gamma \in\mathcal{B}'$ such that $\gamma \cap \mathcal{B}'$ has order type $\beta$. The same is also clearly true of $\mathcal{A}$ and $\mathcal{A}'$.
		
		We now prove the existence of a formula $\psi$ satisfying $\star$ for $\mathcal{A}$ and $\mathcal{B}$. All the cases except $Q^{\text{e.c.}}$ are trivial. Suppose 
		$$\varphi=Q^{\text{e.c.}}v_1,v_2,v_1,v_2(\varphi_1,\varphi_2)$$ and $\psi_1, \psi_2, \psi_3$ correspond to $\varphi_1,\varphi_2$ as described in $\star$. Then we define
		$$\psi:=\exists x\in \On Q^{\On}v_1,v_2,v_1,v_2,v_1,v_2(\psi_1,\psi_2,\psi_{\in}'(x))$$
		$v_3\in \On$ is shorthand for ``$\exists y (y \in x \vee x\in y)$''. This definition of $\psi$ satisfies $\star$ because any linear order over a structure of size $\mu$ will have cofinality in $\Reg_{\leq \mu}$.
		
		The proof now goes through in exactly the same way as $\geq$, and we conclude that $\LST(I,Q^{\text{e.c.}})=\LST(I,Q^{\On})$.
	\end{proof}
	
	\begin{proof}[Proof (\ref{LST Q>R}): $\LST(I,Q^\alpha) \geq \LST(I,R^\alpha)$] We copy the construction from the $\leq$ part of the previous proof. Let $\mathcal{A}$ be an $\mathcal{L}$ structure with cardinality $\lambda\geq \kappa=\LST(I,Q^\alpha)$; let $\mathcal{A}'=\mathcal{A}\sqcup (\lambda,\in)$ viewed as an $\mathcal{L}'=\mathcal{L}\sqcup\{\in\}$ structure with $\in$ only true about pairs of ordinals, let $\mathcal{B}' \prec \mathcal{A}'$ have size less than $\kappa$, and let $\mathcal{B}=\mathcal{B}'\cap \mathcal{A}$. As before, we know that both $\mathcal{A}'$ and $\mathcal{B}'$ contain as many ordinals below their top as there are elements of $\mathcal{A}$ and $\mathcal{B}$ respectively.
			
		Again, we show that for any $\mathcal{L}\cup \{I,R^\alpha\}$ formula $\varphi$, we can find a formula $\psi$ such that $\star$ holds. The only interesting case is
		$$\varphi=R^\alpha v_1,v_1,v_2(\varphi_1,\varphi_2)$$
		Assuming that we have found $\psi_1$ and $\psi_2$ corresponding to $\varphi_1$ and $\varphi_2$ according to $\star$, we let
		\begin{align*}
			\psi:=\exists x \in \On &I v_1,v_1(\psi_1,\psi_{\in}(x)) \wedge\\
			& \forall y \in x \neg I v_1,v_1(\psi_{\in}(x),\psi_{\in}(y))\\
			&Q^\alpha v_1,v_2,v_1,v_2,v_1,v_2(\psi_{\in}'(x),\psi_{\in}'(x),\psi_2)\\
			&\forall y\in x \neg Q^\alpha v_1,v_2,v_1,v_2,v_1,v_2(\psi_{\in}'(x),\psi_{\in}'(y),\psi_2)
		\end{align*} 
		
		It is routine to verify that $\psi$ satisfies $\star$. As before, this lets us conclude that $\LST(I,R^\alpha)\leq \LST(I,Q^\alpha)$.
	\end{proof}
	
	\begin{proof}[Proof (\ref{LST Qalpha>Qbeta})] First suppose that $\beta<\LST(I,Q^\alpha)$. We shall show that $\LST(I,Q^\alpha)\geq \LST(I,Q^\beta)$. The technique is (once again) similar to the previous ones. However, we need to add enough constants to preserve $\beta$ when we take an elementary substructure.
		
	Let $\mathcal{L}$ be a first order language of size less than $\LST(I,Q^\alpha)$, and let $\mathcal{A}$ be an $\mathcal{L}$ structure of size $\lambda\geq\LST(I,Q^\alpha)$ containing no ordinals. Expand $\mathcal{L}$ to a language $\mathcal{L}'=\mathcal{L}\sqcup\{\in\}\sqcup\{c_i: i<\beta+1\}$. Note that $\mathcal{L}'$ still has cardinality less than $\LST(I,Q^\alpha)$. Let $\mathcal{A}'=\mathcal{A}\sqcup (\lambda+1,\in,0,1,\ldots,\beta)$ be an expansion of $\mathcal{A}$ to an $\mathcal{L}'$ structure which adds the first $\lambda+1$ ordinals, assigns the constant symbol $c_i$ to $i\in \mathcal{A}'$, and interprets $\in$ as true only on ordinals. Let $\mathcal{B}'\leq \mathcal{A}'$ be an $\mathcal{L}'\cup \{I,Q^\alpha\}$ elementary substructure of cardinality $<\LST(I,Q^\alpha)$, and let $\mathcal{B}=\mathcal{B}'\cap \mathcal{A}$.
	
	As usual, we prove by induction that for any $\mathcal{L}\cup \{I,Q^\beta\}$ formula $\varphi$ we can find an $\mathcal{L}\cup \{I,Q^\alpha\}$ formula $\psi$ satisfying $\star$ for $\mathcal{A}'$ and $\mathcal{B}'$. The only interesting case is when
	$$\varphi=Q^\beta v_1,v_2,v_1,v_2,v_1,v_2(\varphi_1,\varphi_2,\varphi_3)$$
	If $\psi_1,\psi_2,\psi_3$ are the formulas corresponding to $\varphi_1,\varphi_2,\varphi_3$, then we take
	$$\psi=Q^\alpha v_1, v_2, v_1,v_2, v_1 v_2(\psi_1,\psi_2,\psi_3) \wedge \exists x \in c_\beta Q^\alpha v_1, v_2, v_1, v_2, v_1, v_2(\psi_1,\psi_2, \psi_{\in}(x)$$
	
	The rest of the proof is standard.
		
	Now suppose instead that $\LST(I,Q^\beta)\leq\min(\Reg_\beta)$, and suppose (seeking a contradiction) that $\LST(I,Q^\alpha)<\LST(I,Q^\beta)$. Let $\mathcal{A}$ be an $\mathcal{L}$ structure, for some first order language $\mathcal{L}$ of cardinality less than $\LST(I,Q^\alpha)$). Then $\mathcal{A}$ contains an $\mathcal{L}\cup\{I,Q^\beta\}$ elementary substructure $\mathcal{B}$ of cardinality less than $\LST(I,Q^\beta)$, and $\mathcal{B}$ has an $\mathcal{L}\cup \{I,Q^\alpha\}$ substructure $\mathcal{C}$ of cardinality less than $\LST(I,Q^\alpha)$. But $\lvert \mathcal{B}\rvert<\min (\Reg_\beta)$, so every linear order we can construct from its elements has cofinality in $\Reg_{<\beta}$. Hence, in $\mathcal{B}$ and $\mathcal{C}$, $Q^\alpha$ and $Q^\beta$ are interpreted in exactly the same way. So $\mathcal{C}$ is an $\mathcal{L}\cup \{I,Q^\beta\}$ elementary substructure of $\mathcal{B}$ and hence of $\mathcal{A}$. So any $\mathcal{L}$ structure contains an $\mathcal{L}\cup \{I,Q^\beta\}$ elementary substructure of cardinality $<\LST(I,Q^\alpha)<\LST(I,Q^\beta)$. Contradiction.
	\end{proof}
	
	\begin{proof}[Proof (\ref{LST Ralpha>Rbeta})] Just like the previous case.
	\end{proof}
	
	\begin{proof}[Proof (\ref{LST R0})] First note that $R^0$ is simply always false, since there are no ordinals below $0$. $\LST(I,R^1)\geq \LST(I)$ is also trivial. The only thing to show is $\LST(I,R^1)\leq\LST(I)$. As usual, let $\mathcal{L}$ be a language of size $<\LST(I)$, let $\mathcal{A}$ be an $\mathcal{L}$ structure of size $\lambda \geq \LST(I)$ with no ordinals, let $\mathcal{L}' = \mathcal{L}\sqcup \{\in\}$, let $\mathcal{A}'=\mathcal{A}\sqcup (\lambda+1,\in)$, let $\mathcal{B}'$ be an $\mathcal{L}'\cup \{I\}$ elementary substructure of $\mathcal{A}'$, and let $\mathcal{B}=\mathcal{B}'\cap \mathcal{A}$. Once more, we prove that for any $\mathcal{L}\cup \{I,R^1\}$ formula $\varphi$, we can find an $\mathcal{L}'\cup \{I\}$ formula $\psi$ satisfying $\star$ for $\mathcal{A}$ and $\mathcal{B}$. The only interesting case is
	$$\varphi=R^1 v_1, v_1, v_2(\varphi_1,\varphi_2)$$
	which is true if and only if $\varphi_1$ defines a set of successor cardinality, and $\varphi_2$ defines the empty set. So we use
	\begin{align*}
		\psi:=&(\forall v_1\forall v_2 \neg \psi_2) \wedge  \exists x\in \On \big(\\
		&I v_1,v_1(\psi_1,\psi_{\in}(x)) \wedge \\
		&\wedge \forall y\in x \neg I v_1,v_1(\psi_{\in}(x),\psi_{\in}(y)) \wedge\\
		&\wedge \exists z\in x \forall y\in x z\in y \implies I v_1, v_1 (\varphi_{\in}(z),\varphi_{\in}(y))
	\end{align*}
	The rest of the proof uses the same methods as the other parts of the` theorem.
	\end{proof}
\end{proof}

This theorem shows that if we exclude very large values of $\alpha$, $\LST(I,Q^\alpha)$ and $\LST(I,R^\alpha)$ move downward as we decrease $\alpha$ or move from $Q$ to $R$. At the bottom of this hierarchy is $\LST(I)$, and we saw earlier (from \cite{magidorVaananan}) that the minimum possible value of this is precisely the first inaccessible. At the top is $\LST(I,Q^{\On})=\LST(I,Q^{\text{e.c.}})$, and we saw that the minimum possible value of this is precisely the first Mahlo cardinal. Our goal in this paper is to find a similar result for the other members of the hierarchy. The tagline is:

\begin{quote}
	For $\alpha>0$, the minimum consistent values for $\LST(I,Q^\alpha)$ and $\LST(I,R^\alpha)$ are both precisely the first element of $\Reg_{\alpha}$, provided $\alpha$ is not too large.
\end{quote}

So for example, this says the minimum value of $\LST(I,R^1)=\LST(I)$ is the first element of $\Reg_1$, which is the least inaccessible as we had expected. We will, of course, explain what we mean by ``too large'' shortly; it depends on whether we're dealing with $Q$ or $R$.

First, we shall prove that these cardinals are lower bounds for the $\LST$ numbers.

\begin{theorem}\label{theorem LST R minima}
	Let $\alpha >0$ be such that there are no hyperinaccessibles below $\alpha$. Then if it exists, $\LST(I,R^\alpha)$ is at least the first element of $\Reg_\alpha$.
	
\end{theorem}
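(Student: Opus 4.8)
The plan is to show that $\kappa:=\LST(I,R^\alpha)$, if it exists, is a regular cardinal lying in $\Reg_{\geq\alpha}$; since $\min(\Reg_\alpha)$ is precisely the least element of $\Reg_{\geq\alpha}$ strictly above $\omega$, and the $\kappa$ we produce is visibly far above $\omega$, this yields $\kappa\geq\min(\Reg_\alpha)$. I would argue by contradiction, assuming $\kappa<\min(\Reg_\alpha)$, so that every cardinal in $(\omega,\kappa)$ has Cantor--Bendixson rank strictly below $\alpha$ and is therefore ``visible'' to $R^\alpha$. Running the argument by induction on $\alpha$, I may assume $\LST(I,R^\beta)\geq\min(\Reg_\beta)$ for $\beta<\alpha$; combined with monotonicity (part~\ref{LST Ralpha>Rbeta} of Theorem~\ref{theorem arranging LST numbers}, whose exceptional clause is excluded here by the hypothesis on hyperinaccessibles) this gives $\kappa\geq\sup_{\beta<\alpha}\min(\Reg_\beta)>\alpha$, so the ordinal $\alpha$ has cardinality $<\kappa$ and embeds into any substructure I construct. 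The base case $\alpha=1$ is Magidor and V\"a\"an\"anen's bound for $\LST(I)$, via part~\ref{LST R0}.

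First I would set up the reflection apparatus from the proof of Theorem~\ref{theorem arranging LST numbers}. Fixing a regular $\theta>\min(\Reg_\alpha)$, I form $\mathcal{A}=(H_\theta,\in,\alpha)$, apply the L\"owenheim--Skolem--Tarski property at $\kappa$ to obtain $\mathcal{B}\prec_{I,R^\alpha}\mathcal{A}$ with $\lvert\mathcal{B}\rvert<\kappa$ and $\alpha\subseteq\mathcal{B}$, and pass to the transitive collapse $\mathcal{C}$ with height $\delta=\On\cap\mathcal{C}$. Since $\kappa$ will be shown to be a cardinal, $\lvert\mathcal{B}\rvert<\kappa$ forces $\delta<\kappa$, and $\alpha\subseteq\mathcal{B}$ gives $\alpha\leq\delta$. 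Exactly as in Claim~\ref{Simple LST theorem card claim}, the H\"artig quantifier makes $\delta$ a genuine cardinal and makes $\mathcal{C}$ correct about $\Card$; and because the semantics of $R^\alpha$ refer to $V$, the argument of Claim~\ref{Simple LST theorem reg alpha claim} adapts to give, for each $\gamma<\alpha$, a formula $\Phi_{R,\gamma}$ defining $\Reg_\gamma$ correctly inside $\mathcal{C}$ (using an auxiliary well-order of type $\gamma$, which lies in $\mathcal{C}$ because $\alpha\leq\delta$). In particular, since no cardinal below $\delta$ has rank $\geq\alpha$, the formula $\exists\gamma\,(\beta\leq\gamma<\alpha\wedge\Phi_{R,\gamma}(x))$ correctly defines $\Reg_{\geq\beta}\cap\delta$ in $\mathcal{C}$ for every $\beta<\alpha$.

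The heart of the proof is then to show $\delta\in\Reg_{\geq\alpha}$, which contradicts $\omega<\delta<\kappa<\min(\Reg_\alpha)$. I would prove $\delta\in\Reg_{\geq\beta}$ by induction on $\beta\leq\alpha$. The base case is that $\delta$ is regular, which I obtain as in Magidor and V\"a\"an\"anen's proof that $\LST(I)$ is inaccessible, using $I$ to control the cofinality of the collapse height; the same argument shows at the outset that $\kappa$ itself is a regular cardinal. For the successor step, elementarity transfers ``$\Reg_{\geq\beta}$ is unbounded'', phrased through the $V$-correct formula above, from $\mathcal{A}$ down to $\mathcal{C}$; because the witnesses lie below $\delta<\min(\Reg_\alpha)$ they are correctly ranked, so $\Reg_{\geq\beta}\cap\delta$ is genuinely cofinal in $\delta$, whence $\delta$ is a limit point of $\Reg_{\geq\beta}$ and so $\delta\in\Reg_{\geq\beta+1}$. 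Limit stages follow by intersection.

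The main obstacle is keeping the Cantor--Bendixson bookkeeping coherent through the collapse, and this is exactly what the hypothesis of no hyperinaccessibles below $\alpha$ secures: it guarantees $\alpha<\min(\Reg_\alpha)$ and forces $\beta\mapsto\min(\Reg_\beta)$ to grow strictly past $\alpha$, so that $\alpha<\kappa$, the condition $\alpha\leq\delta$ can be arranged, and the order-type-$\gamma$ auxiliary sets invoked by $R^\alpha$ really do live in $\mathcal{C}$; without it the hierarchy $\langle\Reg_\gamma:\gamma<\alpha\rangle$ could reach a fixed point and the formulas $\Phi_{R,\gamma}$ would cease to separate the ranks, breaking the induction. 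I expect the two genuinely delicate points to be the base case --- extracting regularity of $\delta$ from $I$ alone, since $R^\alpha$ is blind to $\delta$ once its rank reaches $\alpha$ --- and the verification that the reflected unboundedness is witnessed by correctly ranked cardinals, which must be read off $\mathcal{C}$ through $\Phi_{R,\gamma}$ rather than from $V$ directly.
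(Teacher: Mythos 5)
Your strategy pivots on the wrong ordinal. You aim to show that the \emph{height} $\delta=\On\cap\mathcal{C}$ of the transitive collapse lies in $\Reg_{\geq\alpha}$, but this is false for your own setup: if you take $\theta$ to be a successor cardinal (the natural choice, and nothing in your write-up forbids it), then $\mathcal{A}=(H_\theta,\in)$ believes it has a largest cardinal, so by elementarity and the $I$-correctness of $\mathcal{C}$ about cardinals, $\mathcal{C}$ has a largest ($V$-)cardinal $\eta$ and $\delta=\eta^+$ is a \emph{successor} cardinal, i.e.\ $\delta\in\Reg_0$ and certainly $\delta\notin\Reg_{\geq\alpha}$. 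If instead you take $\theta\in\Reg_{\geq\alpha}$, your entire contradiction reduces to the base case of your induction --- that $\delta$ is regular --- and this is exactly the step that cannot be carried out: $\delta$ is not an element of $\mathcal{C}$, so neither $I$ nor $R^\alpha$ (which speak only about cardinalities of definable subclasses, never about the cofinality of the class of all ordinals of the structure) can constrain $\cf(\delta)$, and indeed the substructure handed to you by the $\LST$ property may perfectly well have collapse height of cofinality $\omega$. The appeal to ``Magidor--V\"a\"an\"anen's proof that $\LST(I)$ is inaccessible, using $I$ to control the cofinality of the collapse height'' does not exist in that form: $I$ is blind to cofinalities. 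Your successor step also assumes $\mathcal{A}\vDash$ ``$\Reg_{\geq\beta}$ is unbounded'', which is not available in general (there may be only boundedly many, or no, cardinals of rank $\geq\beta$ below $\theta$).

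The paper's proof avoids all of this by working not with the height of $\mathcal{C}$ but with $\epsilon=\crit(j)$, the least ordinal moved by the inverse collapse $j:\mathcal{C}\rightarrow\mathcal{A}$. Because $\epsilon$ is an \emph{element} of $\mathcal{C}$, the $I$- and $R^\alpha$-correctness of $\mathcal{C}$ (which you do set up correctly, essentially as in Claims \ref{Simple LST theorem card claim} and \ref{Simple LST theorem reg alpha claim}) applies to $\epsilon$ directly: one shows $\epsilon$ is a cardinal, not a successor (else $j(\epsilon)=\epsilon$), and not singular (a cofinal sequence of length $<\epsilon$ would be fixed pointwise by $j$, again forcing $j(\epsilon)=\epsilon$). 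Hence $\epsilon\in\Reg_\gamma$ for some $0<\gamma<\alpha$, the no-hyperinaccessibles hypothesis gives $\gamma<\epsilon$, and since the index $\delta$ of $\epsilon$ in $\Reg_\gamma$ is also below $\epsilon$, both $\gamma$ and $\delta$ are fixed by $j$; elementarity then forces $j(\epsilon)=\epsilon$, the desired contradiction. If you want to salvage your write-up, replace the induction on the rank of the collapse height by this critical-point argument.
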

\begin{proof}
	Suppose $\kappa=\LST(I,R^\alpha)$ is less than the first element of $\Reg_{\alpha}$. Let $\mathcal{A}=( H_{\kappa^+},\in)$. The cardinals of $\mathcal{A}$ are precisely the cardinals of $V$ up to (and including) $\kappa$.
	
	By definition of the $\LST$ number, we can find an elementary substructure $\mathcal{B}\subset \mathcal{A}$ of cardinality less than $\kappa$. Since $\mathcal{A}$ is well founded, so is $\mathcal{B}$, so we can take its transitive collapse $\mathcal{C}=(C,\in)$ and the elementary embedding $j: \mathcal{C}\rightarrow \mathcal{A}$.
	
	Now, letting $\psi_{\in}$ be as in the previous theorem, if $a$ is an assignment sending $v_3$ to $\epsilon$,
	\begin{align*}
		\epsilon \in \Card^{\mathcal{C}}&\iff j(\epsilon) \in \Card^{\mathcal{A}}\\
		&\iff j(\epsilon) \in \Card^V\\
		&\iff\mathcal{A}\vDash \forall x\in v_3\, \neg  I v_1,v_1(\psi_{\in}(v_3),\psi_{\in}(x))[j(a)]\\
		&\iff \mathcal{C}\vDash\forall x\in v_3\, \neg I v_1, v_1(\varphi_{\in}(v_3),\psi_{\in}(x))[a]\\
		&\iff \epsilon \in \Card^V
	\end{align*}
	The last $\iff$ follows because $\mathcal{C}$ is transitive. Similarly, 
	\begin{align*}
		\epsilon \in \Reg^{\mathcal{C}}&\iff j(\epsilon) \in \Reg^\mathcal{A}\\
		&\iff j(\epsilon) \in (\Reg_{<\alpha})^V\\ 
		&\iff \mathcal{A} \vDash \exists x\in \On R^\alpha v_1, v_1, v_2(\psi_{\in}(v_3), \psi_{\in}'(x))[j(a)]\\
		&\iff \mathcal{C}\vDash \exists x\in \On R^\alpha v_1, v_1, v_2 (\psi_{\in}(v_3), \psi_{\in}'(x))[a]\\
		&\iff \epsilon \in (\Reg_{<\alpha})^V\\
		&\iff \epsilon \in \Reg^V
	\end{align*}
	
	$\mathcal{A}$ and $\mathcal{C}$ can then (using the same recursion) calculate which cardinals are in $\Reg_{\beta}^V$ for each $\beta<\alpha$.
	
	Now, let $\epsilon\in \mathcal{C}$ be the least ordinal such that $j(\epsilon)>\epsilon$. (Such a $\epsilon$ must exist: $\mathcal{C}$ has a largest cardinal $\delta$ which must be smaller than $\kappa$, and $j(\delta)=\kappa>\delta$.)
	
	Clearly $\epsilon$ is a cardinal of $\mathcal{C}$. Otherwise there would be some bijection $f:\gamma\rightarrow \epsilon$ for some $\gamma<\epsilon$; and then $j(f)=f$ would be a bijection from $j(\gamma)=\gamma$ to $j(\epsilon)>\epsilon$, which is clearly nonsense. So $\epsilon$ is a cardinal of $\mathcal{C}$, and hence of $V$. Moreover, $\epsilon$ cannot be a successor cardinal: if $\epsilon=(\delta^+)^{\mathcal{C}}$ then
	$$j(\epsilon)=(j(\delta)^+)^{\mathcal{A}}=(j(\delta)^+)^V=(\delta^+)^V=(\delta^+)^{\mathcal{C}}=\epsilon$$
	
	Suppose that $\epsilon$ is singular (in $\mathcal{C}$ or equivalently in $V$). Then we can take some sequence $\mu=(\mu_\beta)_{\beta<\delta}\in \mathcal{C}$ which is cofinal in $\epsilon$, with $\delta<\epsilon$. Since $j{\upharpoonleft} \epsilon=\id$, $j(\mu_\beta)=\mu_\beta$ for all $\beta$, and the length of $j(\mu)$ is $j(\delta)=\delta$. Hence $j(\mu)=\mu$. But $j(\mu)$ is cofinal in $j(\epsilon)$, so then $j(\epsilon)=\epsilon$. Contradiction.
	
	So $\epsilon$ is a regular limit cardinal, and hence is in $\Reg_\gamma^V$ for some $0<\gamma$. We know that $\gamma<\alpha$ since $\lvert \mathcal{C}\rvert < \min (\Reg^V_\alpha)$. As usual, since $\epsilon \in \Reg_\gamma$ we know $\gamma\leq\epsilon$. Moreover, if $\gamma=\epsilon$ then it is a hyperinaccessible, and we are assuming none exist below $\alpha$. So $\gamma<\epsilon$ and $j(\gamma)=\gamma$.
	
	Say $\epsilon$ is the $\delta$'th element of $\Reg_\gamma$. Since $\Reg_\gamma\cap \epsilon$ must be bounded below $\epsilon$, we know $\delta<\epsilon$ and hence $j(\delta)=\delta$. But then
	$$\mathcal{C}\vDash ``\epsilon \text{ is the }\delta\text{'th element of}\Reg_\gamma^{\mathcal{C}}"$$
	
	and hence by elementarity
	$$\mathcal{A}\vDash ``j(\epsilon) \text{ is the }\delta\text{'th element of}\Reg_\gamma^{\mathcal{A}}"$$
	
	But this is saying that $\epsilon$ and $j(\epsilon)$ are both the same element of $\Reg_\gamma^V$. Contradiction.
\end{proof}

By the above and Theorem \ref{theorem arranging LST numbers} it also follows that $\LST(I,Q^\alpha)\geq \min (\Reg_\alpha)$ under the above conditions. In fact, if $\min (\Reg_\alpha)$ is strongly inaccessible (e.g. because we are assuming GCH) then this will hold under weaker conditions:

\begin{theorem}
	Let $\alpha>0$ and suppose that the first element of $\Reg_\alpha$ is strongly inaccessible. Suppose there are no Mahlo cardinals below $\alpha$. Then if it exists, $\LST(I,Q^\alpha)$ is at least the first element of $\Reg_{\alpha}$.
\end{theorem}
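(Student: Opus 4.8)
The plan is to mirror the proof of Theorem~\ref{theorem LST R minima}, replacing the cardinality test supplied by $R^\alpha$ with the cofinality test supplied by $Q^\alpha$, and then to extract a genuine Mahlo cardinal from the critical point of the collapse embedding. So suppose for contradiction that $\kappa=\LST(I,Q^\alpha)$ is below the first element of $\Reg_\alpha$, let $\mathcal{A}=(H_{\kappa^+},\in)$, take an $\mathcal{L}\cup\{I,Q^\alpha\}$ elementary substructure $\mathcal{B}$ of size less than $\kappa$, let $\mathcal{C}$ be its transitive collapse and $j\colon\mathcal{C}\to\mathcal{A}$ the associated embedding. Exactly as before, $\mathcal{A}$ and $\mathcal{C}$ correctly compute $\Card^V$ using $I$. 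For regularity I would reuse the formula $\Phi_R'$ from Claim~\ref{Simple LST theorem reg claim}, but with $Q^{\text{e.c.}}$ replaced by $\exists z\,Q^\alpha(\varphi_{\in}',\varphi_{\in}',\dots,z)$: the point is that every cardinal of $\mathcal{A}$ is at most $\kappa$, and since $\kappa<\min(\Reg_\alpha)$ and $\min(\Reg_\alpha)$ is a strong limit, every such regular cardinal has Cantor--Bendixson rank below $\alpha$, so $Q^\alpha$ detects equal cofinality for all the cofinalities that can arise here and the singular test $\exists y<x\,\exists z\,Q^\alpha(x,y,z)$ isolates the singular cardinals just as $Q^{\text{e.c.}}$ did. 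Strong inaccessibility of $\min(\Reg_\alpha)$ guarantees that every cardinality the argument touches stays below $\min(\Reg_\alpha)$, hence inside $\Reg_{<\alpha}$ where $Q^\alpha$ is accurate. With regularity available, the argument that the critical point $\epsilon=\crit(j)$ is a regular limit cardinal goes through verbatim from Theorem~\ref{theorem LST R minima}.

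The heart of the proof is the following new step: I claim $j(\epsilon)$ is a Mahlo cardinal of $V$. Let $\Phi_M(x)$ be the $\mathcal{L}'\cup\{I,Q^\alpha\}$ formula asserting that $x$ is a regular cardinal (via the formula above) and that every club subset of $x$ contains a regular cardinal. First I would show $\mathcal{C}\models\Phi_M(\epsilon)$. Since $\epsilon$ is regular it remains only to check the stationarity clause, so fix a club $C\in\mathcal{C}$ with $C\subseteq\epsilon$. Because $\epsilon$ is the critical point, $j(C)\cap\epsilon=C$, so $\epsilon=\sup C$ is a limit point of the club $j(C)$ and hence $\epsilon\in j(C)$; as $\mathcal{A}$ recognises $\epsilon$ as a regular cardinal, $\mathcal{A}\models$ ``$j(C)$ contains a regular cardinal'', and $\{I,Q^\alpha\}$ elementarity of $j$ pulls this back to $\mathcal{C}\models$ ``$C$ contains a regular cardinal''. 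Thus $\mathcal{C}\models\Phi_M(\epsilon)$, and applying elementarity once more gives $\mathcal{A}\models\Phi_M(j(\epsilon))$. Since $j(\epsilon)\le\kappa$ (as $\epsilon$ lies below the largest cardinal of $\mathcal{C}$, which $j$ sends to $\kappa$), the structure $\mathcal{A}=H_{\kappa^+}$ contains every club subset of $j(\epsilon)$ and correctly identifies the regular cardinals below it, so $\mathcal{A}\models\Phi_M(j(\epsilon))$ really does say that $j(\epsilon)$ is Mahlo in $V$.

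Finally I would close the argument from $j(\epsilon)$ being a Mahlo cardinal with $j(\epsilon)\le\kappa<\min(\Reg_\alpha)$. A Mahlo cardinal is a fixed point of the Cantor--Bendixson rank function, so $j(\epsilon)\in\Reg_{j(\epsilon)}$ and in particular its rank is at least $j(\epsilon)$. If $j(\epsilon)<\alpha$ this is a Mahlo cardinal below $\alpha$, contradicting the hypothesis. If instead $j(\epsilon)\ge\alpha$, then $j(\epsilon)\in\Reg_{\ge\alpha}$, so $j(\epsilon)\ge\min(\Reg_\alpha)$, contradicting $j(\epsilon)<\min(\Reg_\alpha)$. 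Either way we reach a contradiction. I expect the main obstacle to be this Mahlo reflection step: one must be careful that the notion of ``regular cardinal'' used inside $\Phi_M$ is the genuine $V$-regularity detected by $Q^\alpha$ (not merely regularity in the sense of $\mathcal{C}$), and that the closure clause $\epsilon\in j(C)$ is justified by $\mathcal{A}$'s correctness about clubs; this is precisely where the hypotheses $\kappa<\min(\Reg_\alpha)$ and the strong inaccessibility of $\min(\Reg_\alpha)$ do their work, by forcing every cofinality and cardinality the quantifier is asked about to sit inside $\Reg_{<\alpha}$. Note that, unlike Theorem~\ref{theorem LST R minima}, this argument never needs $\epsilon$ itself to avoid being hyperinaccessible: it tolerates hyperinaccessibles below $\alpha$ and only breaks on a Mahlo, which is exactly the improvement claimed in the remark preceding the statement.
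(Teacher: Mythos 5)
Your proof takes a genuinely different route from the paper's. The paper relativises: since $\min(\Reg_\alpha)$ is strongly inaccessible, $H_{\min(\Reg_\alpha)}$ is a model of $\ZFC$ in which $\LST(I,Q^\alpha)$ would still be small, in which there are no Mahlo cardinals, and in which $Q^\alpha$ is evaluated exactly like $Q^{\On}$; it then invokes the Magidor--V\"a\"an\"anen lower bound for $\LST(I,Q^{\text{e.c.}})$ inside that model to reach a contradiction. You instead inline that lower-bound argument: you run the collapse directly in $V$, show by reflection that $j(\epsilon)$ is Mahlo, and rule such a cardinal out by hand. The skeleton is sound --- the club-reflection step ($j(C)\cap\epsilon=C$, hence $\epsilon\in j(C)$, hence $j(C)$ meets $\Reg$, pull back, push $\Phi_M$ forward) is exactly right, and your closing dichotomy works because a Mahlo $\mu\geq\alpha$ is a limit of elements of $\Reg_{\geq\alpha}$ and hence lies strictly above $\min(\Reg_\alpha)$. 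It is also worth noting that your argument never really uses strong inaccessibility of $\min(\Reg_\alpha)$ (all cardinalities and cofinalities of elements of $H_{\kappa^+}$ are $\leq\kappa$ by fiat, not because of any strong limit hypothesis), whereas the paper needs it to make $H_{\min(\Reg_\alpha)}$ a $\ZFC$ model; so your route, once repaired, proves a slightly stronger statement.

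The repair that is needed: your regularity predicate is wrong as stated, and the error is fatal to the Mahlo step rather than cosmetic. Replacing $Q^{\text{e.c.}}$ by $\exists z\,Q^\alpha(\cdots,z)$ in $\Phi_R'$ does not ``isolate the singular cardinals,'' because $\omega$ belongs to no $\Reg_\beta$: if $\cf(x)=\omega$ then $Q^\alpha(x,y,z)$ is false for every $y$ and $z$, so your test declares $\aleph_\omega$ --- and every cardinal of countable cofinality --- regular. Since $\Phi_M$ quantifies over clubs and every club in an uncountable regular cardinal contains unboundedly many points of countable cofinality, $\mathcal{A}\vDash\Phi_M(j(\epsilon))$ with the broken predicate is vacuously true and certifies nothing about Mahloness. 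The fix is the paper's own footnote trick for $Q^{\On}$: ``$x$ has uncountable cofinality'' is expressible as $\exists z\,Q^\alpha(\varphi_{\in}',\varphi_{\in}',\varphi_{\in},x,x,z)$, and this is correct in the present setting precisely because every cofinality arising in $H_{\kappa^+}$ is $\leq\kappa<\min(\Reg_\alpha)$ and the only element of $\Reg_{\geq\alpha}$ below $\min(\Reg_\alpha)$ is $\omega$, so every uncountable regular in sight lies in $\Reg_{<\alpha}$. Taking ``$x$ is regular'' to mean ``$x\in\Card$ and either $x=\omega$, or both $\exists z\,Q^\alpha(x,x,z)$ and $\forall y\in x\,\forall z\,\neg Q^\alpha(x,y,z)$'' restores correctness of the predicate in both $\mathcal{A}$ and $\mathcal{C}$, and with that in place the rest of your argument goes through.
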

\begin{proof}
	Suppose that $\LST(I,Q^\alpha)<\min(\Reg_{\alpha})=:\kappa$. Since $\kappa$ is strongly inaccessible, $M:=H_{\kappa}$ is a model of $\ZFC$. It is easy to see that $\LST^M(I,Q^\alpha)\leq \LST^V(I,Q^\alpha)\in M$: if $\mathcal{A}\in M$ is an $\mathcal{L}$ structure, then it contains some $\mathcal{L}\cup \{I,Q^\alpha\}$ elementary substructure $\mathcal{B}\in V$ of cardinality $<\LST(I,Q^\alpha)^V$. Then $\mathcal{B}\in H_\kappa=M$.
	
	Moreover, $M$ contains no Mahlo cardinals. This is because (by assumption) it has none below $\alpha$, and since any Mahlo cardinal is hyperinaccessible, there cannot be any in the interval $[\alpha,\min (\Reg_{\alpha}))=[\alpha,\kappa)$. So we know that $\LST^M(I,Q^{\text{e.c.}})=\infty$.
	
	But $M$ doesn't contain any cardinals which are in $\Reg_{\alpha},\Reg_{\alpha+1},\ldots$. So as far as $M$ is concerned $Q^{\alpha}$ is evaluated in exactly the same way as $Q^{\On}$. Hence, $\LST^M(I, Q^\alpha)= \LST^M(I,Q^{\On})=\infty$. Contradiction.
\end{proof}

A similar argument shows that if there are no Mahlo cardinals below $\alpha$ and $\Reg_\alpha = \emptyset$ then $\LST(I,Q^\alpha)=\infty$.

This gives us the promised largeness criterion, necessary for the bad option of \ref{LST Qalpha>Qbeta} and \ref{LST Ralpha>Rbeta} of Theorem \ref{theorem arranging LST numbers} to hold:

\begin{corollary}\label{corollary LST beta alpha inequality works}
	If $\beta<\alpha$ and there are no hyperinaccessibles below $\alpha$ then $\LST(I,R^\beta)\leq \LST(I,R^\alpha)$ and $\LST(I,Q^\beta)\leq \LST(I,Q^\alpha)$. If instead there are no Mahlo cardinals below $\alpha$ and $\min (\Reg_\alpha)$ is strongly inaccessible, then $\LST(I,Q^\beta)\leq \LST(I,Q^\alpha)$.
\end{corollary}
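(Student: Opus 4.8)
The plan is to read the corollary off the two dichotomies recorded in parts \ref{LST Qalpha>Qbeta} and \ref{LST Ralpha>Rbeta} of Theorem \ref{theorem arranging LST numbers}, by arguing that under each hypothesis the ``bad'' branch of the relevant dichotomy cannot occur. Recall that part \ref{LST Ralpha>Rbeta} asserts that either $\LST(I,R^\beta)\leq \LST(I,R^\alpha)$ (the desired conclusion) or else \emph{both} $\beta\geq \LST(I,R^\alpha)$ and $\LST(I,R^\beta)>\min(\Reg_\beta)$; part \ref{LST Qalpha>Qbeta} says the analogous thing for $Q$. So in every case it suffices to contradict the clause $\beta\geq \LST(I,R^\alpha)$ (respectively $\beta\geq\LST(I,Q^\alpha)$). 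In particular, if the $\LST$ number in question is $\infty$ then $\beta\geq\infty$ is absurd, so we may freely assume it exists and then simply need a lower bound on it that exceeds $\beta$.

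The one arithmetic ingredient I would isolate first is that $\min(\Reg_\alpha)\geq\alpha$. This is immediate from the observation already used in the proof of Theorem \ref{theorem LST R minima}, namely that any $\epsilon\in\Reg_\gamma$ satisfies $\gamma\leq\epsilon$; applied to $\gamma=\alpha$ it shows every element of $\Reg_\alpha$ is at least $\alpha$. Since $\beta<\alpha$, this yields $\beta<\alpha\leq\min(\Reg_\alpha)$, which is the strict inequality all the contradictions will rest on.

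For the first statement, suppose there are no hyperinaccessibles below $\alpha$. Then Theorem \ref{theorem LST R minima} applies and gives $\LST(I,R^\alpha)\geq\min(\Reg_\alpha)\geq\alpha>\beta$, so the clause $\beta\geq\LST(I,R^\alpha)$ fails and the good branch of part \ref{LST Ralpha>Rbeta} must hold, establishing $\LST(I,R^\beta)\leq\LST(I,R^\alpha)$. For the $Q$ half I would chain this with part \ref{LST Q>R}: since $\LST(I,Q^\alpha)\geq\LST(I,R^\alpha)\geq\alpha>\beta$ (exactly the inequality noted in the remark following Theorem \ref{theorem LST R minima}), the clause $\beta\geq\LST(I,Q^\alpha)$ also fails, and part \ref{LST Qalpha>Qbeta} then delivers $\LST(I,Q^\beta)\leq\LST(I,Q^\alpha)$.

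For the second statement the argument is identical except that the lower bound on $\LST(I,Q^\alpha)$ is now supplied directly by the preceding theorem rather than through parts \ref{LST Q>R} and \ref{theorem LST R minima}: assuming $\min(\Reg_\alpha)$ is strongly inaccessible and that there are no Mahlo cardinals below $\alpha$, that theorem gives $\LST(I,Q^\alpha)\geq\min(\Reg_\alpha)\geq\alpha>\beta$, again contradicting $\beta\geq\LST(I,Q^\alpha)$ and forcing the good branch of part \ref{LST Qalpha>Qbeta}. I do not anticipate any real obstacle: the entire corollary is bookkeeping that combines the dichotomies with the lower bounds already proved. The only point requiring care is that those lower-bound theorems are phrased as ``if it exists,'' which is precisely why the strategy begins by disposing of the case where the $\LST$ number is $\infty$ before invoking them.
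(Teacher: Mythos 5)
Your proposal is correct and follows essentially the same route as the paper: rule out the bad branch of the dichotomies in Theorem \ref{theorem arranging LST numbers} by using the lower-bound theorems to get $\LST(I,Q^\alpha)\geq\min(\Reg_\alpha)\geq\alpha>\beta$ (and likewise for $R$). The only cosmetic difference is that the paper disposes of $\beta=0$ separately, whereas your argument handles it uniformly since $\beta\geq\LST(I,\cdot)$ can never hold for $\beta=0$.
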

\begin{proof}
	If $\beta=0$ then this is trivial, as both $Q^0$ and $R^0$ are always false. So assume $\beta>0$.
	
	We address the $Q$ case first. We are assuming there are no Mahlo cardinals below $\alpha$, and we have just seen that this implies $\LST(I,Q^\alpha)\geq \min (\Reg_\alpha)$. But $\min (\Reg_\alpha) \geq \alpha>\beta$, so $\beta<\LST(I,Q^\alpha)$. We saw in Theorem \ref{theorem arranging LST numbers} that this implies $\LST(I,Q^\beta)\leq \LST(I,Q^\alpha)$.
	
	The $R$ case follows in exactly the same way.
\end{proof}

So we have seen that if $\alpha$ is not too large, the first element of $\Reg_{\alpha}$ is a lower bound for $\LST(I,Q^\alpha)$ and $\LST(I,R^\alpha)$. We now want to show these lower bounds are optimal. This our main result, and the rest of this paper is devoted to its proof.

Just before we begin, we shall note in passing that there is also an upper bound for $\LST(I,Q^\alpha)$ and $\LST(I,R^\alpha)$: they can never be above a supercompact. The proof is fairly simple, but introduces several of the techniques we will use in proving the main theorem.

\begin{theorem}\label{theorem LST maximum}
	Let $\kappa$ be a supercompact, and let $\alpha<\kappa$. Then $\LST(I,Q^\alpha)$ exists, and is no larger than $\kappa$. Hence, the same is true of $\LST(I,R^\alpha)$.
\end{theorem}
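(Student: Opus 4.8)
The plan is to reflect the existence of a small elementary substructure down from a large elementary structure inside a supercompactness ultrapower. First I would dispose of the trivial case: if $\lvert\mathcal{A}\rvert<\kappa$ then $\mathcal{A}$ is its own elementary substructure of size $<\kappa$, so assume $\mathcal{A}$ is an $\mathcal{L}$-structure with $\lvert\mathcal{A}\rvert=\lambda\geq\kappa$ and $\lvert\mathcal{L}\rvert<\kappa$. Using that $\kappa$ is $\lambda$-supercompact, fix an elementary embedding $j:V\to M$ with $\crit(j)=\kappa$, $j(\kappa)>\lambda$ and ${}^{\lambda}M\subseteq M$. Since $\alpha<\kappa=\crit(j)$ we have $j(\alpha)=\alpha$, and the quantifier symbols are fixed, so $j(Q^\alpha)=Q^\alpha$ and $j(I)=I$. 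The candidate substructure is $\mathcal{B}:=j``\mathcal{A}$, the pointwise image of $\mathcal{A}$; it is a substructure of $j(\mathcal{A})$ (being the image of $\mathcal{A}$ under the isomorphism $j\upharpoonright\mathcal{A}$ onto its range), it has size $\lambda<j(\kappa)$, and because ${}^{\lambda}M\subseteq M$ it is an element of $M$.

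The heart of the argument is an agreement between $M$ and $V$. Since $M$ is closed under $\lambda$-sequences and $j(\kappa)>\lambda$, $M$ and $V$ compute the same cardinals $\leq\lambda^{+}$, the same cofinalities for ordinals of $V$-cofinality at most $\lambda$, and --- since the Cantor--Bendixson recursion defining the classes $\Reg_\gamma$ only ever looks downwards --- the same sets $\Reg_\gamma\cap(\lambda+1)$ for every $\gamma$. Now every subset of $\mathcal{A}^{2}$ (resp.\ $\mathcal{B}^{2}$) definable in $\mathcal{A}$ (resp.\ $\mathcal{B}$) has cardinality at most $\lambda$, so any cardinality, cofinality, or membership in some $\Reg_\gamma$ needed to evaluate $I$ or $Q^\alpha$ over these structures concerns objects of size $\leq\lambda$. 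Consequently the truth value of any $I$- or $Q^\alpha$-statement over $\mathcal{A}$ or $\mathcal{B}$ is the same whether computed in $V$ or in $M$. This is exactly the point at which both $\alpha<\kappa$ (so that the relevant quantifier is $Q^\alpha$, not $Q^{j(\alpha)}$) and the closure of $M$ are used.

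With this in hand I would prove two elementarity facts. The first, immediate from the elementarity of $j:V\to M$, is that $j\upharpoonright\mathcal{A}$ is an $\mathcal{L}\cup\{I,Q^\alpha\}$-elementary embedding of $\mathcal{A}$ (computed in $V$) into $j(\mathcal{A})$ (computed in $M$): the statement ``$\mathcal{A}\models\varphi[\bar a]$'', written out with the set-theoretic semantics of the quantifiers, is preserved by $j$ and lands on ``$j(\mathcal{A})\models\varphi[j(\bar a)]$'' evaluated in $M$. The second, proved by induction on $\mathcal{L}\cup\{I,Q^\alpha\}$-formulas using the agreement of the previous paragraph at each quantifier step, is that $j\upharpoonright\mathcal{A}$ is in fact an $\mathcal{L}\cup\{I,Q^\alpha\}$-isomorphism of $\mathcal{A}$ onto $\mathcal{B}$, where satisfaction over $\mathcal{B}$ is computed in $M$. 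Combining the two facts, for every formula $\varphi$ and every tuple from $\mathcal{B}$, satisfaction of $\varphi$ in $\mathcal{B}$ and in $j(\mathcal{A})$ agree (both in $M$); that is, $M\models$``$\mathcal{B}$ is an $\mathcal{L}\cup\{I,Q^\alpha\}$-elementary substructure of $j(\mathcal{A})$ of size $<j(\kappa)$''. By the elementarity of $j$, with $\kappa,\mathcal{A},\mathcal{L}$ sent to $j(\kappa),j(\mathcal{A}),j(\mathcal{L})$, it follows that $V\models$``$\mathcal{A}$ has an $\mathcal{L}\cup\{I,Q^\alpha\}$-elementary substructure of size $<\kappa$''. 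Hence $\LST(I,Q^\alpha)$ exists and is at most $\kappa$; the statement for $R^\alpha$ is then immediate from $\LST(I,Q^\alpha)\geq\LST(I,R^\alpha)$, which is part (\ref{LST Q>R}) of Theorem \ref{theorem arranging LST numbers}.

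I expect the main obstacle to be the second paragraph together with the inductive claim of the third: making precise that the $M$--$V$ agreement on cardinals, cofinalities, and Cantor--Bendixson ranks below $\lambda^{+}$ really does guarantee that $I$ and $Q^\alpha$ are evaluated identically in the two models, and then threading this agreement through the induction on formulas so that the isomorphism $j\upharpoonright\mathcal{A}$ preserves the second-order quantifiers and not merely the first-order structure.
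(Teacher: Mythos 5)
Your proposal is correct and follows essentially the same route as the paper: reflect via a $\lambda$-supercompactness embedding $j:V\to M$, use the closure ${}^{\lambda}M\subseteq M$ to see that $I$ and $Q^\alpha$ are evaluated identically over the small structure in $V$ and in $M$, conclude inside $M$ that $j(\mathcal{A})$ has an $\mathcal{L}\cup\{I,Q^\alpha\}$-elementary substructure of size $<j(\kappa)$, and pull back by elementarity of $j$. The only divergence is that you take the substructure to be $j``\mathcal{A}$ rather than $\mathcal{A}$ itself; this is a sound choice (indeed it sidesteps the paper's casual remark that $j$ is the identity on the domain $\lambda$, which is not literally true above the critical point, and it matches what the paper does later in the main theorem).
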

\begin{proof}
	Let $\mathcal{L}$ be a first order language of cardinality less than $\kappa$, and let $\mathcal{A}\in V$ be an $\mathcal{L}$ structure, of cardinality $\lambda\geq \kappa$. Without loss of generality, let us say that the domain of $\mathcal{A}$ is $\lambda$. Without loss of generality, let us further say that $\mathcal{L}\in H_\kappa$. We want to show that there is a substructure of cardinality less than $\kappa$.
	
	Let $j:V\rightarrow M$ be an elementary embedding with critical point $\kappa$, such that $j(\kappa)>\lambda$ and $M^\lambda\subset M$. (Such an embedding exists by definition of a supercompact). Note that since $\mathcal{L}\in H_\kappa$, $j$ acts as the identity on $\mathcal{L}$. Moreover, $\mathcal{A}$ can be coded as a $\lambda$ sequence. Since $M$ contains all its $\lambda$ sequences, we know that $\mathcal{A}\in M$. Also notice that since $\alpha<\kappa\leq\lambda$ we know that $j((Q^\alpha)^V)=(Q^\alpha)^M$. And of course, $j(I^V)=I^M$.
	
	Another consequence of $M$ containing all its $\lambda$ sequences is that it correctly calculates the cardinalities and cofinalities of all subsets of $\lambda$ and $\lambda \times \lambda$. In particular, both $I$ and $Q^\alpha$ are evaluated the same way over $\mathcal{A}$ in both $M$ and $V$. It follows that if $\Phi$ is an $\mathcal{L}\cup \{I,Q^\alpha\}$ formula (with parameters from $\mathcal{A}$ and $\mathcal{L}$) then
	$$(\mathcal{A} \vDash \Phi)^M \iff (\mathcal{A}\vDash \Phi)^V$$
	
	By elementarity, we know that
	$$(\mathcal{A}\vDash \Phi)^V \iff (j(\mathcal{A}) \vDash \Phi)^M$$
	
	There is a small technicity which is disguised by this notation. $\Phi$ can have parameters from $\mathcal{A}$ and $\mathcal{L}$, which we have suppressed. For elementarity to hold, the $\Phi$ on the right must take the images of those parameters under $j$. But in fact, since $j$ acts as the identity on $\mathcal{L}$ and on the domain $\lambda$ of $\mathcal{A}$, doing $j$ to those parameters just gives us back the original parameters.
	
	This argument implies that in $M$,
	$$\mathcal{A}\vDash \Phi \iff j(\mathcal{A})\vDash \Phi$$
	
	Hence, from the perspective of $M$, $\mathcal{A}$ is an $\mathcal{L}\cup \{I,Q^\alpha\}$ elementary substructure of $j(\mathcal{A})$ of cardinality $\lambda<j(\kappa)$. So $M$ believes: ``$j(\mathcal{A})$ contains an $\mathcal{L}\cup \{I,Q^\alpha\}$ elementary substructure of cardinality less than $j(\kappa)$.'' So by elementarity, $V$ believes: ``$\mathcal{A}$ contains an $\mathcal{L}\cup \{I,Q^\alpha\}$ elementary substructure of cardinality less than $\kappa$.'' Which is what we wanted to show.
\end{proof}

\section{The main theorem}

\begin{theorem}\label{theorem LST main result}
	Let $0<\alpha\in \On$. Suppose it is consistent that there is a supercompact cardinal larger than $\alpha$ and GCH holds. Then it is also consistent that $\alpha$ is countable (and so has no hyperinaccessibles below it) and $\LST(I,Q^\alpha)$ and $\LST(I,R^\alpha)$ are both no larger than the first element of $\Reg_{\alpha}$.
\end{theorem}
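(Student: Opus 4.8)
The plan is to take the hypothesised model and force the supercompact $\kappa$ (with $\kappa>\alpha$ and GCH) to become the least element of $\Reg_\alpha$, while retaining enough of its reflection --- in a \emph{generic} sense --- to rerun the argument of Theorem~\ref{theorem LST maximum} with the smaller bound $\min(\Reg_\alpha)$ in place of a supercompact. First I would dispose of the easy part: since $\alpha<\kappa$, the small collapse $\col(\omega,\alpha)$ makes $\alpha$ countable, and by L\'evy--Solovay this preserves the supercompactness of $\kappa$; I then rename the resulting model $V$ and work over it. After this step $\alpha<\omega_1$ lies below every inaccessible, so there are automatically no hyperinaccessibles below $\alpha$, as the lower-bound theorems require.

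The core is a reverse Easton iteration $\PP_\kappa$ of length $\kappa$ that \emph{sculpts} the Cantor--Bendixson hierarchy below $\kappa$. At each stage $\gamma<\kappa$ whose rank has reached $\alpha$, I force (by ${<}\gamma$-closed club-shooting, as in the standard destruction of Mahloness) to cap its rank strictly below $\alpha$, while deliberately leaving cofinally many cardinals of every rank $\beta<\alpha$ intact. In $V[G]$ the effect is that no $\gamma<\kappa$ lies in $\Reg_{\geq\alpha}$, whereas $\kappa$ --- still inaccessible and a limit of the surviving lower-rank cardinals --- does lie in $\Reg_{\geq\alpha}$; hence $\kappa=\min(\Reg_\alpha)$. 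To avoid the usual critical-stage clash I would guide the iteration by a Laver function so that, for the supercompactness embedding used below, the stage-$\kappa$ forcing of $j(\PP_\kappa)$ is trivial, keeping $\kappa$'s rank correct in the target model.

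With the iteration fixed, the upper bound proceeds exactly as in Theorem~\ref{theorem LST maximum}, except that the witnessing embedding now exists only generically. Given an $\mathcal{L}$-structure $\mathcal{A}\in V[G]$ of size $\lambda\geq\kappa$ with domain $\lambda$ and $\mathcal{L}\in H_\kappa$, pick $j\colon V\to M$ with $\crit(j)=\kappa$, ${}^\lambda M\subseteq M$ and $j(\kappa)>\lambda$, and lift it to $\hat\jmath\colon V[G]\to M[G*H]$. Since $\crit(j)=\kappa$ fixes $\PP_\kappa$ pointwise, $j''G=G$ sits inside any generic $G*H$ for $j(\PP_\kappa)$, and the tail generic $H$ is built from the high closure of $\PP_{[\kappa,j(\kappa))}$ in $M$ together with ${}^\lambda M\subseteq M$ and GCH counting. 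As before, $M[G*H]$ sees $\mathcal{A}$ as an $\mathcal{L}\cup\{I,Q^\alpha\}$-elementary substructure of $\hat\jmath(\mathcal{A})$ of size $\lambda<j(\kappa)$, so it believes ``$\hat\jmath(\mathcal{A})$ has such a substructure of size ${<}j(\kappa)$''; by elementarity $V[G]$ believes $\mathcal{A}$ has an $\mathcal{L}\cup\{I,Q^\alpha\}$-elementary substructure of size ${<}\kappa$. This last statement is first order over $V[G]$, so it holds there outright --- crucially, $\kappa$ need \emph{not} be measurable in $V[G]$ (indeed it cannot be, being $\min(\Reg_\alpha)$), because the lift lives only in the further extension. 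Thus $\LST(I,Q^\alpha)\leq\kappa=\min(\Reg_\alpha)$, and by part~\ref{LST Q>R} of Theorem~\ref{theorem arranging LST numbers} also $\LST(I,R^\alpha)\leq\min(\Reg_\alpha)$.

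I expect two steps to carry the difficulty. The first is the sculpting itself: arranging that \emph{every} $\gamma<\kappa$ is expelled from $\Reg_{\geq\alpha}$ yet cofinally many representatives of each rank $\beta<\alpha$ survive, so that $\kappa$ genuinely attains rank $\geq\alpha$, handling successor and limit $\alpha$ uniformly. The second, and the true crux, is checking that the generic lift computes $I$- and $Q^\alpha$-facts about $\mathcal{A}$ correctly: $M[G*H]$ must agree with $V[G]$ on the cardinalities, the cofinalities, and --- most delicately --- the Cantor--Bendixson ranks of those cofinalities for the definable subsets of $\lambda\times\lambda$. Securing this forces a balance between the closure and sparseness built into the tail (so it adds no new subsets of, and changes no relevant cofinalities or ranks below, the cardinals $Q^\alpha$ can reference over $\mathcal{A}$) and the demand $j(\kappa)>\lambda$; this is where the real work of the proof will lie.
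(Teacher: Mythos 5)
Your high-level strategy --- collapse $\alpha$ to be countable, prepare below $\kappa$, make $\kappa=\min(\Reg_\alpha)$, and lift a supercompactness embedding generically to transfer a small substructure --- is the right one, and your opening step and the final reduction to $\LST(I,R^\alpha)$ via Theorem \ref{theorem arranging LST numbers}(\ref{LST Q>R}) match the paper. But both of the steps you yourself flag as carrying the difficulty are resolved in your sketch by mechanisms that cannot work. First, the sculpting: ${<}\gamma$-closed club-shooting at $\gamma$ preserves the regularity of $\gamma$ and leaves cofinally many elements of each $\Reg_\beta$ ($\beta<\alpha$) below $\gamma$; but any regular $\gamma$ below which every $\Reg_\beta$, $\beta<\alpha$, is unbounded is automatically a limit point of each club generated by $\Reg_{\geq\beta}$ and hence lies in $\Reg_{\geq\alpha}$. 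So your two stated goals --- expel every $\gamma<\kappa$ from $\Reg_{\geq\alpha}$ while keeping representatives of every lower rank cofinal below $\kappa$ --- are jointly unachievable by club-shooting alone; Cantor--Bendixson rank, unlike Mahloness, is insensitive to making a class non-stationary. The paper instead collapses wholesale: a modified non-Mahlo forcing $\NM_\kappa$ adds a club $C$ whose limit points are singular and whose successor points lie in $\Reg_\alpha$, and an Easton product $\col_\kappa$ of L\'evy collapses kills every cardinal in $(f(\gamma),\Succ_C(\gamma))$ for $\gamma\in C$, so that each surviving regular below $\kappa$ is either a successor cardinal or an element of some $\Reg_\beta$, $\beta<\alpha$, living in a short interval $[\gamma,f(\gamma)]$.

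Second, and more seriously, your plan to use the Laver function to make the stage-$\kappa$ forcing of $j(\PP)$ trivial is incompatible with the lift. By elementarity $j(\PP)$ must expel every $\gamma<j(\kappa)$, in particular $\kappa$ itself, from $\Reg_{\geq\alpha}$ in the target model, and by the observation above this forces that model to singularise or collapse $\kappa$; moreover the club $C\subseteq\kappa$ must sit as an initial segment of the club through $j(\kappa)$, whose limit points are singular, so $\kappa$ must become singular. Hence the stage-$\kappa$ forcing of $j(\PP)$ must be highly nontrivial. The paper's solution is to make it a Prikry-type forcing $\QQ_\kappa$ that gives $\kappa$ cofinality $\omega$ without collapsing it or adding bounded subsets, and which simultaneously generates an $\NM_\kappa$-generic club and a master condition $H_G$ for $\col_\kappa$; the technical heart of the proof (Lemma \ref{lemma difficult magidor lemma}) shows that an $\NM_\kappa*\col_\kappa$-generic over $V$ can be absorbed into a $\QQ_\kappa*\col_\kappa$-generic, which is what allows the $V$-generic to be transferred into a $j(\PP)$-generic at all. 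One then checks directly that the residual disagreement about $\kappa$ (cofinality $\omega$ on one side, a member of $\Reg_{\geq\alpha}$ on the other) is invisible to $I$ and $Q^\alpha$, since $Q^\alpha$ only reports cofinalities lying in $\Reg_{<\alpha}$ and both $\omega$ and $\kappa$ fall outside that class. Without something playing the role of $\QQ_\kappa$, the tail generic $H$ you invoke cannot be chosen compatibly with $G$, and the crux you correctly identify remains unaddressed.
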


\begin{proof}
	
We adapt a technique from \cite{magidorVaananan}. The first step is to show that without loss of generality, we may assume that $\epsilon$ is countable. Take a universe $V_0$ which believes GCH and has a supercompact $\kappa>\epsilon$. Let $V$ be a generic extension of $V_0$ by the collapsing forcing $\col(\omega,\lvert\epsilon\rvert)$. We show that $V$ inherits the properties of $V_0$: $\kappa$ is supercompact, and GCH holds. Preservation of GCH is standard, but the supercompactness of $\kappa$ requires a little work. Recall the \textit{Silver Lifting Criterion}:

\begin{lemma}\label{lemma j and j*}\cite{cummings_handbook}[9.1](Silver)
	Let $j:\tilde{V}\rightarrow M$ be an elementary embedding from some universe $\tilde{V}$ to a model $M$. Let $\PP\in \tilde{V}$ be a forcing. Let $G$ be $\PP$ generic, and let $H$ be $j(\PP)$ generic. Suppose that for all $p\in G$, $j(p)\in H$. Then there is an elementary embedding $j^*: \tilde{V}[G]\rightarrow M[H]$, which extends $j$.
	
			
\end{lemma}

If we add some extra assumptions, we can also prove that $j^*$ preserves $\lambda$ sequences like a supercompact embedding.

\begin{lemma}
	Suppose the conditions of the above lemma hold, that $j(\PP)=\PP$, $H=G$ and that $\PP$ satisfies the $\lambda^+$ chain condition. Then $\tilde{V}[G]$ believes that $M[G]^\lambda \subset M[G]$.
\end{lemma}

In fact, this can be proved in much more general circumstances: rather than assuming that $j(\PP)=\PP$ and $H=G$ we only need that $G\in M[H]$ and $H\in \tilde{V}[G]$. But that's overkill for this proof.

\begin{proof}
	First, note that since $M$ is definable in $\tilde{V}$, $M[G]$ is definable in $V[G]$. Let $S=(S_\gamma)_{\gamma<\lambda}\in \tilde{V}[G]$ be a $\lambda$ sequence of elements of $M[G]$. Since $M$ and $G$ are definable in $\tilde{V}[G]$, we can also define a sequence $\dot{S}=(\dot{S}_\gamma)_{\gamma<\lambda}\in \tilde{V}[G]$ of names for the terms of $S$. (Note that $S$ itself is not a name, it's just a sequence of names.) Let $\dot{\Sigma}\in \tilde{V}$ be a $\PP$ name for $\dot{S}$. Let $p\in G$ force $\dot{\Sigma}$ to be a $\lambda$ sequence of $\PP$ names, each of which is an element of $M$.
	
	For $\gamma<\lambda$, let $A_\gamma$ be a maximal antichain of conditions below $p$ which decide which name $\dot{\Sigma}(\gamma)$ is going to be interpreted as. So for any $q\in A_\gamma$ there is some name $\sigma_{q,\gamma}\in M$ such that $q\forces \dot{\Sigma}(\gamma)=\sigma_{q,\gamma}$. Then $\dot{\Sigma}$ is forced by $p$ to be equal to
	$$\tau := \{\langle (\sigma_{q,\gamma},\check{\gamma}), q\rangle : \gamma<\lambda, q\in A_\gamma\}$$
	
	By the chain condition, $\lvert A_\gamma\rvert \leq \lambda$ for all $\gamma<\lambda$, and hence $\tau$ has cardinality $\leq \lambda$. All the elements of $\tau$ are in $M$, $\tau\in \tilde{V}$ and $M$ is closed under $\lambda$ sequences, so $\tau \in M$. So $\dot{S}=\tau^G\in M[G]$. The conclusion that $S\in M[G]$ is now immediate.
\end{proof}

	\begin{corollary}
		$\kappa$ is supercompact in $V$.
	\end{corollary}
	\begin{proof}
		Let $\lambda>\kappa$. Let $j:V_0\rightarrow M$ be a $\lambda$ embedding: i.e. elementary, with critical point $\kappa$, $j(\kappa)>\lambda$ and $M^\lambda \subset M$. Let $G$ be the $\PP:=\col(\omega,\lvert \alpha \rvert)$ generic filter used in constructing $V$ (so $V=V_0[G]$). Note that $\PP$ is small compared to $\kappa$, so $j(\PP)=\PP$ and $G$ is also generic over $M$. Clearly for all $p\in G$, $j(p)=p\in G$. So $j$ extends to an elementary embedding $j^*: V_0[G]\rightarrow M[G]$ by the first lemma. Since $j^*$ extends $j$ it has critical point $\kappa$ and sends $\kappa$ up above $\lambda$. Since $\PP$ is small compared to $\kappa$ it certainly satisfies the $\lambda^+$ chain condition, and therefore by the second lemma $V=V_0[G]$ believes that $M[G]^\lambda \subset M[G]$.
	\end{proof}
	
	From now on, we will forget about $V_0$ and the collapsing forcing, and just work in $V$, a universe where $\alpha$ is countable, GCH holds and $\kappa$ is supercompact, and where $\Reg_{<\alpha}$ is unbounded. Note that since $\kappa$ is supercompact it is hyperinaccessible, and therefore $\Reg_{<\alpha}$ is also unbounded below $\kappa$. We will also assume (by cutting off the top of the model if necessary) that $\Reg_{\alpha}\setminus \kappa^+=\emptyset$. Note that $\kappa$ will be an inaccessible limit of elements of $\Reg_{<\alpha}$ (so it is in $\Reg_{\beta}$ for some $\beta>\alpha$), and by the assumption we just made it is the largest element of this $\Reg_{\beta}$.
	
	\subsection{The Structure of the Proof}
	
	We will construct a forcing which will make $\kappa$ the first element of $\Reg_{\alpha}$ and ensure $\LST(I,Q^\alpha),\LST(I,R^\alpha)\leq \kappa$. We'll start by giving an informal sketch of how the proof is going to work, before we dive into the technicalities.
	
	The forcing $\PP$ we use is built using three components. The first we will meet is a non-Mahlo forcing $\NM_\lambda$ which adds a club $C$ of singular cardinals below some cardinal $\lambda$. The second is a collapsing forcing $\col_\lambda$ which collapses all the cardinals below that $\lambda$, except for those which are a ``short distance'' above some element of $C$. This will leave $\lambda$ as the first element of $\Reg_{\alpha}$.
	
	The third component we meet is a Prikry-style forcing $\QQ_\lambda$, for a cardinal $\lambda$. It adds an $\NM_\lambda$ generic club, adds a new \textit{condition} to the $\col_\lambda$ forcing, and singularises $\lambda$. $\PP$ will be an iteration of $\QQ_\lambda$ forcings (for strategically chosen $\lambda<\kappa$), followed by $\NM_\kappa$ and then $\col_\kappa$. This gives us a universe $V^\PP$ in which $\kappa$ is the first element of $\Reg_\alpha$. How do we show that $\LST(I,Q^\alpha)\leq \LST(I,R^\alpha) \leq \kappa$? We use a similar approach to Theorem \ref{theorem LST maximum}.
	
	Say that $\mathcal{A}$ is a structure with cardinality $\mu\geq\kappa$. Because $\kappa$ is supercompact in $V$, we can find an embedding $j:V\rightarrow M$ such that $j(\kappa)>\mu$ and such that $\mu$ is only a ``short distance" above $\kappa$ (in the sense we were discussing a moment ago). With a good deal of effort, we can show that $\PP$ embeds nicely into $j(\PP)$, and that the conditions of Lemma \ref{lemma j and j*} hold for some suitable $j(\PP)$ generic extension of $M$.
	
	Now, in $M^{j(\PP)}$, we know that no cardinals between $\kappa$ and $\mu$ are collapsed or singularised, because $\mu$ is only a ``short distance'' above $\kappa$ and $\kappa$ is in the club of singulars added by $\NM_{j(\kappa)}$. And below $\kappa$ we can also show that $V^{\PP}$ and $M^{j(\PP)}$ agree about the cardinals and the regular cardinals. Hence, $\mathcal{L}\cup \{I,Q^\alpha\}$ interprets all statements about $\mathcal{A}$ in the same way in $V^{\PP}$ and $M^{j(\PP)}$. (There are some technicalities involved with dealing with $\kappa$ itself here, but they turn out not to be a problem.) From there, the proof follows that of Theorem \ref{theorem LST maximum}.
	
	So in summary, the proof will consist of the following steps:
	
	\begin{enumerate}
		\item Define a function $f$ that formalises the concept of ``a short distance''
		\item Define the non-Mahlo and collapsing forcings $\NM_\lambda$ and $\col_\lambda$ we discussed above
		\item Define the forcing $\QQ_\lambda$, which adds an $\NM_\lambda$ generic filter, adds a condition to $\col_\lambda$ and singularises $\lambda$
		\item Show that $\NM_\lambda * \col_\lambda$ embeds nicely in $\QQ_\lambda * \col_\lambda$
		\item Put these forcings together in some way to get the forcing $\PP$ we will be using
		\item Show that in a $\PP$ generic extension, the $\LST$ is number at most $\kappa$
	\end{enumerate}
	
	\subsection{Defining $f$}
	
	Recall the Laver function, for any supercompact cardinal $\kappa$:
	
	\begin{lemma*}\cite{laverMakingSupercompactnessIndestructible}
		There is a function $h:\kappa \rightarrow V_\kappa$ such that given any $x\in V$ and any $\mu \geq \kappa$, there is an $M$ with  $M^{\mu}\subset M$ and an embedding $j:V\rightarrow M$ with critical point $\kappa$, such that $j(\kappa)>\mu$ and $j(h)(\kappa)=x$.
	\end{lemma*}
	
	We use this $h$ to define a related function $f$ specifically for the ordinals.
	
	\begin{lemma}\label{lemma defining f}
		There is a strictly increasing function $f:\kappa\rightarrow \kappa$, whose range is a subset of $\Reg_\alpha$, such that:
		\begin{enumerate}
			\item For all $\gamma<\kappa$, the interval $(\gamma,f(\gamma))$ contains unboundedly many elements of $\Reg_\beta$ for every $\beta<\alpha$, but no elements of $\Reg_{\alpha}$
			\item For any $\mu>\kappa$, there is a model $M$ with $M^{\mu}\subset M$ and an elementary embedding $j:V\rightarrow M$ with critical point $\kappa$ such that $j(\kappa)>\mu$ and $j(f)(\kappa)>\mu$.
		\end{enumerate}
	\end{lemma}
	\begin{proof}For $\gamma<\kappa$, let $g(\gamma):=h(\gamma)$ if the latter is an ordinal greater than $\gamma$ but smaller than $\kappa$, and $(\gamma,g(\gamma) ) \cap \Reg_{\alpha}=\emptyset$, and otherwise let $g(\gamma):=\gamma$. Let $f(\gamma)$ be the least element of $\Reg_{\alpha}$ which is (strictly) above $g(\gamma)$. It is trivial to see that $f$ is strictly increasing and satisfies the first requirement.
		
		Fix $\mu>\kappa$. Using the previous result, let $M$ and $J:V\rightarrow M$ be such that $j(h)(\kappa)=\mu$. We know that $M$ is closed under $\mu$ sequences, so it correctly calculates the cardinalities and cofinalities of all cardinals below $\mu$. In particular, $\Reg_\alpha^M$ agrees with $\Reg_\alpha^V$ up to $\mu$, and hence $(\kappa,\mu)\cap \Reg_\alpha=\emptyset$ from the perspective of $M$. Hence $j(g)(\kappa)=j(h)(\kappa)=\mu$. So $j(f)(\kappa)>\mu$.
		
	\end{proof}
	
	In the language we used in the preamble to this proof, $\delta$ is ``a short distance" above $\gamma$ if $\delta \in (\gamma,f(\gamma))$.
	
	\subsection{NM and Col}
	Now we have $f$, we can define the simpler forcings used in this construction, $\NM_\lambda$ and $\col_\lambda$. Throughout this section, let $\tilde{V}$ be any universe which contains the $f$ we found in the previous section. We will also assume that $f$ has the same relation to $\Reg$ in $\tilde{V}$ as it does in $V$ (although we do \textit{not} assume that $\Reg_\beta^{\tilde{V}}=\Reg_\beta^V$ for any particular $\beta\leq \alpha$), and (in preparation for the future) we assume that GCH holds in $\tilde{V}$ except that there are perhaps a few singular cardinals $\lambda$ such that $2^\lambda=\lambda^{++}$. Let us fix $\lambda\leq \kappa$ to be a $\lambda^+$ supercompact cardinal in $\tilde{V}$.
	
	
	A Mahlo cardinal is one for which the class of regulars below it is stationary, so to stop it being Mahlo we need to add a club which consists only of singular cardinals. We use a variant of the usual club shooting forcing which also chooses some elements of $\Reg_\alpha$.
	
	\begin{definition}
		We define the non-Mahlo forcing $\NM_\lambda$ as follows. Its elements are closed bounded subsets of $\lambda$, whose minimum element is $\omega$, whose successors are all taken from $\Reg_{\alpha}$, and whose limits are all singular. We order $\NM_\lambda$ by end inclusion.
	\end{definition}
	
	Note that this is not quite the usual non-Mahlo forcing: it gives us a club whose \textit{limits} are singular, but whose successors are elements of $\Reg_{\alpha}$. (Since $\Reg_\alpha$ is unbounded below $\lambda$, we do get an unbounded subset of $\lambda$ despite the odd requirement for successors.) By contrast, a standard non-Mahlo forcing would just add a club of singular cardinals. Of course, we can obtain a fully singular club here simply by deleting all the successors, so $\NM_\lambda$ does indeed force that $\lambda$ is no longer Mahlo.
	
	Although $\NM_\lambda$ is not strictly $<\lambda$ closed (since the limit of a sequence of conditions could be inaccessible) it is almost $<\lambda$ closed, and we can prove the usual results of $<\lambda$ closed-ness.
	
	\begin{lemma}
		Let $\mu<\lambda$. Then there are densely many conditions $p$ such that $\NM_\lambda {\upharpoonleft} p$ is $\mu$ closed.
	\end{lemma}
	\begin{proof}
		Let $p\in \NM_\lambda$ be any condition whose final element is larger than $\mu$. Let $p\geq p_0\geq p_1\geq \ldots$ be a descending sequence of conditions of length $\mu$. Then $p_0 \subset p_1 \subset \ldots$ are all end extensions of one another, so $\bigcup_{i<\mu}p_i$ is a set whose successors are all in $\Reg_\alpha$ and whose limit \textit{elements} are all singular. It is closed, except that it might not contain its supremum $\gamma$. But $\gamma\geq \sup p>\mu$ has cofinality $\cof(\mu)$, so it is singular. This implies that $\gamma<\lambda$, and that $\bigcup_{i<\mu}p_i \cup \{\gamma\}\in \NM_\lambda$.
	\end{proof}
	
	\begin{corollary}\label{corollary basic facts about NM}
		$\NM_\lambda$ is $<\lambda$ distributive, does not collapse or singularise any cardinals, and preserves GCH, except perhaps that $2^\lambda$ becomes $\lambda^{++}$.
	\end{corollary}

	Throughout this whole section, we shall write $C$, and variants thereof, to refer to the generic club added by $\NM_\lambda$. It should (hopefully) always be clear from context to which $\lambda$ we are referring. Notice that for any $\gamma\in C$, we know $\Succ(\gamma)\geq f(\gamma)$, because there are no $\Reg_{\alpha}$ between $\gamma$ and $f(\gamma)$.
	
	Next, we define the collapsing forcing. Recall that if $\gamma$ is regular and $\delta$ is strongly inaccessible, the forcing $\col(\gamma,<\delta)$ collapses all the cardinals between $\gamma$ and $\delta$ down to $\gamma$.
	
	For the rest of this section, let us fix a club $C$ which is generic for $\NM_\lambda$. We will work in some generic extension $\tilde{V}[G]$ of $\tilde{V}$ containing $C$, but we \textit{do not} assume that $\tilde{V}[G]=\tilde{V}[C]$. We will, however, assume that $G$ does not collapse or singularise any cardinals below $\lambda$, and preserves GCH below $\lambda$. In particular, this means that all the successors of $C$ will be strongly inaccessible in $\tilde{V}[G]$. The reason for this slightly arcane approach is that we will eventually need to have definitions for $\col_\lambda$ in two different universes: $\tilde{V}^{\NM_\lambda}$, and $\tilde{V}^{\QQ_\lambda}$. We have already seen that $\NM_\lambda$ preserves GCH; once we define $\QQ_\lambda$ we will discover that it also preserves GCH below $\lambda$.
	
	The forcing we want to work with is a combination of these collapsing forcings, and is defined in terms of $C$. We want a forcing which will collapse, for each $\gamma \in C$, every cardinal between $f(\gamma)$ and $\Succ_C(\gamma)$. Once we have done this, it will be easy to see that it makes $\lambda$ the first element of $\Reg_{\alpha}$. The forcing we want to use is the Easton product of all the collapsing forcings:
	
	\begin{definition}
		Let $\tilde{V}[G]$ be some generic extension of $\tilde{V}$, which contains an $\NM_\lambda^{\tilde{V}}$ generic club $C$, and preserves GCH below $\lambda$. We define the forcing $\col_\lambda$ to be the set of all the elements $h$ of
		$$\prod_{\gamma\in C} \col(f(\gamma),<\Succ_C(\gamma))$$
		
		such that for all regular cardinals $\mu \leq \lambda$, the set
		$$\{\gamma \in C\cap \mu : h(\gamma)\neq \emptyset\}$$
		
		is bounded in $\mu$. We order $\col_\lambda$ in the obvious way.
	\end{definition}

Note that $\col_\lambda$ depends critically on our choice of $\NM_\lambda$ generic club $C$. So really, we should include $C$ as a subscript. But there will never be more than a single generic club below $\lambda$, so we will leave this implicit.

$\col_\lambda$ also depends on which cardinals are regular. So even if two universes both contain the same generic club $C$, we may find that $\col_\lambda$ is different. In particular, once we have defined $\QQ_\lambda$ we will find that $\col_\lambda$ will be different in $\tilde{V}^{\NM_\lambda}$ and $\tilde{V}^{\QQ_\lambda}$, even if both generic extensions add the same club.
	
	If $c$ is a closed subset of $\lambda$, then we extend the above definition by defining $\col(c)$ in the obvious way, as the Easton support product of $\col(f(\gamma),<\Succ_c(\gamma))$ forcings for $\gamma\in c \setminus \max(c)$. If $c\in \tilde{V}$ then this can be defined in the ground model $\tilde{V}$, before we add an $\NM_\lambda$ generic club $C$. Of course, once we do add $C$, we know that if $c$ is an initial segment of $C$ then $\col(c)$ will be a subset of $\col(C)=\col_\lambda$. (Technically, this is only true if we allow a minor abuse of notation about the domains of the functions in $\col(c)$ and $\col_\lambda$.)
	
	The following results are proved in standard ways:
	
	\begin{proposition}\label{proposition col(c) closed}
		Let $c$ be a closed set of cardinals (in $\tilde{V}$ or $\tilde{V}[G]$) whose successors are all inaccessibles below $\kappa$. If $\gamma=\min(c)$ then $\col(c)$ is $<f(\gamma)$ closed in any universe where this makes sense. In particular, this means it is $\gamma^+$ closed.
	\end{proposition}

	\begin{lemma}\label{lemma basic facts about col}
		In any universe where $\col_\lambda$ is defined (i.e. any universe containing an $\NM_\lambda$ generic club $C$ whose successors are strongly inaccessible), it does not collapse any cardinals except those which it is supposed to (i.e. those in the interval $(f(\gamma), \Succ_C(\gamma))$ for some $\gamma \in C$). Nor does it singularise any other cardinals.
	\end{lemma}
	
	\begin{corollary}
		In any universe where $\col_\lambda$ is defined, if $0<\beta<\alpha$ then $\col_\lambda$ does not modify $\Reg_\beta$, other than removing those elements which are in an interval $(f(\gamma),\Succ_C(\gamma))$.
	\end{corollary}
		
		
		
		
	
	Notice also that in the $\col_\lambda$ generic extension, there are no elements of $\Reg_\alpha$ below $\lambda$, and that $\lambda$ is a limit of elements of $\Reg_\beta$ for all $\beta<\alpha$. So if $\lambda$ is regular in $\tilde{V}[G]$ (and hence in the $\col_\lambda$ generic extension) then it will be the first element of $\Reg_\alpha$.
	
	\subsection{The Prikry-style forcing $\mathbb{Q}$}
	
	Again, we fix a $\lambda^+$ supercompact $\lambda\leq\kappa$ in a universe $\tilde{V}$ which knows about $f$.
	
	The forcing $\QQ_\lambda$ is similar to a Prikry forcing, but with some extra components. We want any $\QQ_\lambda$ generic filter to not only singularise $\lambda$, but also to define a $\NM_\lambda$ generic club $C$. For reasons that will become clearer later, we also want it to define some kind of ``generic \textit{element}" of $\col_\lambda$.
	
	Where a standard Prikry forcing would add an $\omega$ sequence of single ordinals, we will arrange for $\QQ_\lambda$ to add an $\omega$ sequence whose terms are taken from the following set:
	
	\begin{definition}
		$K_\lambda$ is the set of all triples $\langle c,x,\epsilon\rangle$ where:
		\begin{enumerate}
			\item $c\in \NM_\lambda$;
			\item $h\in \col(c)$;
			\item $\epsilon \in \On$ and $\max(c)<\epsilon<\lambda$.
		\end{enumerate}
		
		For $\beta<\lambda$, $K_\lambda^\beta$ is the subset of $K_\lambda$ consisting of all the triples $\langle c,h,\epsilon \rangle$ where the least element of $c$ is greater than $\beta$ (and thus also $\epsilon>\beta$).
	\end{definition}
	
	The Prikry-style generic sequence we're aiming for will consist of an element $\langle c_0,h_0,\epsilon_0 \rangle$ of $K_\lambda$, then an element $\langle c_1,h_1,\epsilon_1 \rangle$ of $K_\lambda^{\epsilon_0}$, and so on up through the all $n\in\omega$. From such a sequence $G$, we will be able to extract a club $C=\bigcup_{n\in \omega} c_n$ in $\lambda$, an element $H=\bigcup_{n\in \omega} h_n$ of $\prod_{\gamma\in C} \col(\gamma,<\Succ_C(\gamma))$ and an $\omega$-sequence $(\epsilon_n)$. If we set up the forcing correctly, we will later discover that $C$ is $\NM_\lambda$ generic, that $H\in \col^{\tilde{V}[G]}_\lambda$, and that $(\epsilon_n)$ is cofinal in $\lambda$.
	
	It's fairly easy to see what the finite stem should look like in this context. But how can we find an analogue of a measure $1$ set of ordinals? We must define a measure on $K_\lambda$, and in fact on $K_\lambda^\beta$ for every $\beta<\lambda$. To do this, we first define an ordering on $K_\lambda$:
	
	\begin{definition}
		Let $\langle c,h,\epsilon\rangle$ and $\langle \tilde{c},\tilde{h},\tilde{\epsilon}\rangle$ be elements of $K_\lambda$. We say that $\langle\tilde{c},\tilde{h},\tilde{\epsilon}\rangle\leq^* \langle c,h,\epsilon\rangle$ if:
		
		\begin{enumerate}
			\item $\tilde{c}\leq c$, that is, $\tilde{c}$ is an end extension of $c$;
			\item $\tilde{h}{\upharpoonleft} \max(c) = h$;
			\item $\tilde{\epsilon}\geq \epsilon$.
		\end{enumerate}
	\end{definition}
	
	Notice the unusual nature of the second clause. It's not enough that $\tilde{h}\leq h$ in the $\col(\tilde{c})$ ordering. It must actually agree completely with $h$, although it is allowed to add more things once we're above the area where $h$ is defined. This is because $H=\bigcup h_n$ is supposed to be a \textit{condition} in $\col_\lambda$, not a $\col_\lambda$ generic filter.
	
	Of course, this also defines an ordering on $K_\lambda^\beta\subset K_\lambda$ for every $\beta<\lambda$.
	
	\begin{lemma}
		Let $\beta<\lambda$. Let $F$ be the family of all subsets of $K_\lambda^\beta$ which contain a $\leq^*$ dense open subset of $K_\lambda^\beta$. Then $F$ is a $\lambda$ complete filter in $K_\lambda^\beta$ in the usual $\subset$ ordering of $\mathcal{P}(K_\lambda^\beta)$.
	\end{lemma}
	
	The following is a standard consequence of $\lambda^+$ supercompactness:
	
	\begin{lemma}\cite[22.17]{kanamori}
		Let $S$ be a set of size $\lambda$, and let $F$ be a $\lambda$ complete filter on $\mathcal{P}(S)$. Then $F$ can be extended to a $\lambda$ complete ultrafilter $U$.
	\end{lemma}
		
		
	
	\begin{corollary}
		Let $\beta<\lambda$. There is an ultrafilter $U_\beta$ on $K_\lambda^\beta$ which contains all the $\leq^*$ dense open subsets of $K_\lambda^\beta$.
	\end{corollary}
	
	In fact, of course, there will be many such ultrafilters, but we will fix a single one for the rest of this section to call $U_\beta$. With $U_\beta$ in hand, we can define the analogue of the measure $1$ set of ordinals in a Prikry forcing.
	
	\begin{definition}
		Let $T$ be a tree of height $\omega$, whose nodes are all elements of $K_\lambda$. We abuse notation by allowing the same element of $K_\lambda$ to appear multiple times, provided no element appears twice as direct successors of the same node. We say $T$ is \textit{nice} if the following hold:
		
		\begin{enumerate}
			\item If $\langle \tilde{c},\tilde{h},\tilde{\epsilon}\rangle <_T \langle c,h,\epsilon\rangle\in T$, then $\langle \tilde{c},\tilde{h},\tilde{\epsilon}\rangle\in K_\lambda^\epsilon$;
			\item If $\langle c,h,\epsilon \rangle \in T$ then the set of its direct successors (which is a subset of $K_\lambda^\epsilon$ by the previous condition) is in $U_\epsilon$.
		\end{enumerate}
	\end{definition}
	
	In the forcing $\QQ_\lambda$, our conditions will have two components: a finite sequence $s$ of terms from $K_\lambda$ and a nice tree $T$ which gives us a map of where the sequence is allowed to go from there. The root of $T$ will be the final term of $s$.
	
	\begin{definition}
		The forcing $\QQ_\lambda$ has conditions of the form
		
		$$\big((\langle c_0,h_0,\epsilon_0\rangle, \langle c_1,h_1,\epsilon_1\rangle,\ldots,\langle c_{n-1},h_{n-1},\epsilon_{n-1}\rangle),T\big)$$
		
		for some $n\in \omega$, where
		
		\begin{enumerate}
			\item $\langle c_0,h_0,\epsilon_0\rangle \in K_\lambda$ if $n>0$;
			\item For $0<i<n$, $\langle c_i,h_i,\epsilon_i\rangle \in K_\lambda^{\epsilon_{i-1}}$;
			\item $T$ is a nice tree
			\item The root of $T$ is $\langle c_{n-1},h_{n-1},\epsilon_{n-1}\rangle$.
		\end{enumerate}
		
		We call $\epsilon_{n-1}$ the height of the condition (writing $\height$ in symbols).
		
		The conditions are ordered in the usual way for a Prikry style forcing: $(s',T') \leq (s,T)$ if $s'$ is an end extension of $s$ and there is a path $B=b_0,b_1,\ldots,b_k$ through $T$ of length $k:=\lvert s'\setminus s\rvert+1$ such that:
		\begin{enumerate}
			\item $b_0$ is the root of $T$ (i.e. the last element of $s$)
			\item For all $0<i\leq k$, $b_i$ is the $i$'th element of $s'\setminus s$
			\item $T'$ is a subtree of $T$ whose root is $b_k$
		\end{enumerate}
		
		As usual for Prikry forcings, if $s'=s$ we say $(s',T')$ is a direct extension of $(s,T)$ and write $(s',T')\leq^*(s,T)$.
	\end{definition}
	
	Note: We now have two different definitions of $\leq^*$. One talks about elements of $K_\lambda$ and the other about conditions in $\QQ_\lambda$.
	
	\begin{proposition}\label{proposition Qlambda quasi closed}
		The forcing $(\QQ_\lambda, \leq^*)$ is ${<}\lambda$ closed. That is, any descending sequence $T_0\supset T_1\supset T_2\ldots$ of nice trees with the same root will have a nice tree as their intersection.
	\end{proposition}
	\begin{proof}
		Follows from the fact that $U_\epsilon$ is closed under $<\lambda$ intersections.
	\end{proof}
	
	\noindent From this, we can prove the Prikry property.
	
	\begin{lemma}\label{Lemma Prikry Property of Q}
		Let $p\in \QQ_\lambda$. Let $\varphi$ be first order (perhaps with parameters). Then there is some $q\leq^* p$ deciding $\varphi$.
	\end{lemma}
The proof is left as an exercise for the reader. It's essentially the same as the standard proof for Prikry forcings, but the notation gets rather technical when it is written out formally.

	\begin{corollary}
		Let $\varphi(x)$ be a formula with one free variable, let $\mu\leq\lambda$, and suppose $p\forces \exists x<\check{\mu} \,\varphi(x)$. Then there is some $q\leq^* p$ and some $\gamma <\mu$ such that $q\forces \varphi(\check{\gamma})$.
	\end{corollary}

\begin{proof}
	Standard from \ref{Lemma Prikry Property of Q}.
\end{proof}
	
	This tells us that $\QQ_\lambda$ does not do any unexpected collapsing or singularising of cardinals.
	
	\begin{lemma}\label{lemma facts about Qlambda}
		$\QQ_\lambda$ does not add any new bounded subsets of $\lambda$, collapse any cardinals, or singularise any cardinals apart from $\lambda$. It preserves GCH where it holds in $\tilde{V}$, except at $\lambda$.
	\end{lemma}

	\subsection{Embedding NM*Col into Q*Col}
	
	Once again, fix a $\lambda^+$ supercompact $\lambda\leq \kappa$ in a universe $\tilde{V}$ which knows about $f$. 
	As we discussed earlier, we can extract elements of $\NM_\lambda$ and $\col_\lambda$ from a $\QQ_\lambda$ condition.
	
	\begin{definition} We define two schemes of abbreviations.
		
		\begin{enumerate}
			\item Let
			$$p=((\langle c_0,h_0,\epsilon_0\rangle, \langle c_1,h_1,\epsilon_1\rangle,\ldots,\langle c_{n-1},h_{n-1},\epsilon_{n-1}\rangle),T)$$
			be a condition in $\QQ_\lambda$. We define $c_p=\bigcup_{i<n} c_i$ and $h_p=\bigcup_{i<n}h_i$.
			\item Let $G$ be $\QQ_\lambda$ generic. We define $C_G=\bigcup_{p\in G} c_p$ and $H_G=\bigcup_{p\in G} h_p$.
		\end{enumerate}
	\end{definition}
	
	An equivalent, but less friendly, definition is that
	
	$$C_G=\bigcup \{c: \exists h, \epsilon \,\, \langle c,h,\epsilon\rangle \text{ is a term in the first part of some } p\in G\}$$
	and that
	$$H_G=\bigcup \{h: \exists c, \epsilon \,\, \langle c,h,\epsilon\rangle \text{ is a term in the first part of some } p\in G\}$$
	
	We shall now establish what these four objects actually are, in terms of $\NM_\lambda$ and $\col_\lambda$. The first two objects, $c_p$ and $h_p$, are simply conditions of the relevant forcings in $\tilde{V}$:
	
	\begin{proposition}\label{proposition c_p h_p}
		Let $p\in \QQ_\lambda$. Then $c_p\in \NM_\lambda$ and $h_p \in \col(c_p)$ in $\tilde{V}$.
	\end{proposition}
		
		
		
	
	$C_G$ will be generic for $\NM_\lambda$, and there is a useful correspondence between $\NM_\lambda$ and $\QQ_\lambda$:
	
	\begin{lemma}
		If $G$ is $\QQ_\lambda$ generic, then $C_G$ is a club which is $\NM_\lambda$ generic (over $\tilde{V}$). Moreover, given any $\NM_\lambda$ name $\sigma$, there is a $\QQ_\lambda$ name $\varphi(\sigma)$ such that for any $\QQ_\lambda$ filter $G$ (not necessarily generic), $\sigma^{C_G}=\varphi(\sigma)^{G}$.
	\end{lemma}
	(As usual, in the statement of the lemma we are muddling the definition of a generic filter over $\NM_\lambda$, and the club corresponding to that generic filter.)
	\begin{proof}
		It is easy to see that $C_G$ is a club in $\lambda$, and that all its closed initial segments are elements of $\NM_\lambda$. We must show that it is generic. Let $D\subset \NM_\lambda$ be open dense. We will show that $X_D:=\{p\in \QQ_\lambda: c_p\in D\}$ is dense in $\QQ_\lambda$.
		Fix a condition $p=(s,T)\in \QQ_\lambda$. Since $D$ is dense in $\NM_\lambda$,
		$S_{D,p}:=\{\langle c,h,\epsilon\rangle \in K_\lambda^{\height(p)} : c_p\cup c\in D\}$ is $\leq^*$ dense in $K_\lambda^{\height(p)}$. Clearly, it is also open.	So $S_{D,p}\in U_{\height(p)}$ because $U_{\height(p)}$ contains all the open dense subsets of $K_\lambda^{\height(p)}$. Since the set of all successors of the root of $T$ is also in $U_{\height(p)}$, there must be a level $1$ element of the tree $T$ which is in $S_{D,p}$. But then that gives us a $1$ step extension $q\leq p$ in $\QQ_\lambda$, such that the extra term $\langle c,h,\epsilon\rangle$ in $q$ is an element of $S_{D,p}$. It follows that $c_q=c_p\cup c \in D$, and hence $q\in X_D$ as required.
		
		The second part of the lemma is an easy recursion definition.
		
	\end{proof}
	
	So $\QQ_\lambda$ behaves well regarding $\NM_\lambda$. What about $\col_\lambda$? We want $\QQ_\lambda * \col^{\tilde{V}[G]}_\lambda$ to play nicely with $\NM_\lambda * \col^{\tilde{V}[C_G]}_\lambda$. In both cases, $\col_\lambda$ is defined using the same club $C_G$. However, $\col^{\tilde{V}[G]}(C_G)$ and $\col^{\tilde{V}[C_G]}(C_G)$ are not the same: we used an Easton product, and $\lambda$ is regular in $V[C_G]$ but not in $V[G]$. On the other hand, we do at least know that $\col^{\tilde{V}[C_G]}(C_G)\subset \col^{\tilde{V}[G]}(C_G)\cap \tilde{V}[C_G]$.
	
	
	This is where the condition $H_G$ named by $G$ comes in. It is a sort of ``generic element'' of $\col^{\tilde{V}[G]}(C_G)$ which ensures the two forcings will play nicely. First, we must verify that $H_G$ really is a condition.
	
	\begin{lemma}
		Let $G$ be $\QQ_\lambda$ generic. Then $H_G\in \col^{\tilde{V}[G]}_\lambda$.
	\end{lemma}
	\begin{proof}
		It is easy to see that $H_G\in \prod_{\gamma\in C_G} \col(\gamma,<\Succ_{C_G}(\gamma))$. We must verify that its support is bounded below every $\tilde{V}[G]$ regular cardinal $\mu\in[0,\lambda]$. If $\mu<\lambda$, this follows from Proposition \ref{proposition c_p h_p}: take $p\in G$ such that $\sup c_p > \mu$, and then $H_p\vert \mu = h_p \in \col(c_p)$ and hence (since $\mu$ is regular in $\tilde{V}$) the support of $H_p$ is bounded below $\mu$.
		
		On the other hand, we know that $\QQ_\lambda$ singularises $\lambda$, so the case $\mu=\lambda$ is vacuous.
	\end{proof}
	
	\begin{lemma}\label{lemma difficult magidor lemma}\cite[Lemma 30]{magidorVaananan}
		Let $G$ be $\QQ_\lambda$ generic. Let $G^*$ be $\col^{\tilde{V}[G]}_\lambda$ generic over $\tilde{V}[G]$, and contain $H_G$. Then the filter $G^{**}:=G^*\cap \col^{\tilde{V}[C_G]}_\lambda$ is $\col^{\tilde{V}[C_G]}_\lambda$ generic over $\tilde{V}[C_G]$.
	\end{lemma}
	\begin{proof}
		It is easy to check that $G^{**}$ is a filter; the challenge is showing that it's generic. So let $\dot{D}$ be an $\NM_\lambda$ name for a dense open subset of $\col^{\tilde{V}[C_G]}(C_G)=\col^{\tilde{V}[C_G]}_\lambda$. 
		
		For any $\QQ_\lambda$ generic filter $G$, and for any $\col^{\tilde{V}[G]}(C_G)$ generic filter $G^*$ containing $H_G$, the set $G^*\cap D \neq \emptyset$, where $D=\dot{D}^{C_G}$.
		
		To begin with, we work in $\tilde{V}[C_G]$ for a fixed filter $G$. Let $D=\dot{D}^{C_G}$ as above. For $\delta\in \Succ(C_G)$, consider two forcings $\PP^\delta:=\col^{\tilde{V}[C_G]}(C_G\setminus \delta+1)$ and  $\PP_\delta:=\col^{\tilde{V}[C_G]}(C_G\cap (\delta+1))$. Let
		
		For $h\in \PP^\delta$ we define the set $D_h:=\{h'\in \PP_\delta: h'\cup h\in D\}$. We then define the set ${D^\delta:=\{h\in \PP^\delta: D_h\text{ is open dense}\}}$.
		
		\begin{claim}
			$D^\delta$ is an open dense subset of $\PP^\delta$.
		\end{claim}
		\begin{proof}
			$D$ is open, so if $\tilde{h}\leq h\in \PP^\delta$ then $D_{\tilde{h}}\supset D_h$. Hence $D_{\tilde{h}}$ is dense. Moreover, $D_{\tilde{h}}$ is also open, since $D$ is open. Hence $D^\delta$ is open. To show $D^\delta$ is also dense, let us fix $h\in \PP^\delta$.
			
			Now $\PP_\delta$ has cardinality $\delta$, so we can enumerate its elements $\{h_\gamma: \gamma<\delta\}$. We construct a decreasing sequence $(h_\gamma')_{\gamma<\delta+1}$ of length $\delta$ of conditions in $\PP^\delta$. Let $h_0'=h$. For $\epsilon<\delta$, we choose some $h_{\epsilon+1}'\leq h_\epsilon'$ such that for some $h^*\leq h_\epsilon$, the condition $h^*\cup h_{\epsilon+1}'\in D$. We can do this easily, since $D$ is dense: just take some element of $D$ below $h_\epsilon \cup h_\epsilon'$ and let $h_{\epsilon+1}'$ be the part of it which is above $\delta$. For limit $\gamma\leq \delta$, we take $h_\gamma$ to be below every earlier term of the sequence, which we can do since $\PP^\delta$ is $\delta^+$ closed by Proposition \ref{proposition col(c) closed}.
			
			Now, $h_\delta'\leq h$ is such that for all $h_\gamma\in \PP_\delta$, there exists $h^*\leq h_\gamma$ such that $h^*\cup h_\delta'\in D$, since $h_\delta'\leq h_{\gamma+1}'$ and $D$ is open. It follows immediately that $D_{h_\delta'}$ is dense. Since $D$ is open, it's also immediate that $D_{h_\delta'}$ is open. Hence $h_\delta' \in D^\delta$. But $h'_\delta\leq h$ so $D^\delta$ is dense.
		\end{proof}
		
		We now work in $\tilde{V}$. We shall show that $\mathbbold{1}_{\QQ_\lambda}$ forces the following statement:
		
		\begin{quotation}``There are two ordinals $\delta<\beta$ in $C_G$, with $\delta$ a successor element of $C_G$, such that $H_G{\upharpoonleft} [\delta,\beta]\in D^\delta$.''\end{quotation}
		
		This statement makes sense, since $\tilde{V}[C_G]$ is a definable subclass of $\tilde{V}[G]$  and so $\tilde{V}[G]$ knows what $D^\delta$ looks like for any $\delta$. 
		Fix $p\in \QQ_\lambda$. Let $\height(p)=\epsilon$. We shall show there is a one step extension of $p$ which forces the above statement.
		
		Let us say $\langle c,h,\gamma\rangle \in K_\lambda^\epsilon$ is \textit{cooperative} if there is some element $\delta \in \Succ(c)$ such that ${c_p\cup c \Vdash_{\NM_\lambda} h{\upharpoonleft} (c\setminus \delta)\in D^\delta}$. (Recall that by definition of $K_\lambda$, we know $h\in \col^{\tilde{V}}(c)$, so $h\in \col^{\tilde{V}[C_G]}(C_G)$. So $h{\upharpoonleft} (c\setminus \delta)\in \PP^\delta$ in $\tilde{V}[C_G]$, and thus it makes sense ask whether it is in $D^\delta$.)
		
		\begin{claim}The set of all cooperative elements of $K_\lambda^\epsilon$ is $\leq^*$ dense.
		\end{claim}
		\begin{proof}
			Let $\langle c,h,\gamma\rangle \in K_\lambda^\epsilon$. Without loss of generality, we can assume $c$ has a largest element. (If it doesn't, then we can simply extend $c$ arbitrarily by one step to get a $(c',h,\gamma')<^*(c,h,\gamma)$ and work with that instead.) Let $\delta$ be that largest element.
			
			$c_p\cup c$ forces that $\delta$ is a successor element of the club that $\NM_\lambda$ adds, so it forces that $D^\delta$ is an open dense subset of $\PP^\delta$. Hence, there is some end extension $c'\leq c$ and some name $\dot{h}'$ for an element of $\PP^\delta$ such that $c_p\cup c'\Vdash_{\NM_\lambda} \dot{h}'\in D^\delta$. 
			
			Now, in $\tilde{V}[C_G]$ an element of $\PP^\delta$ is a sequence of conditions in collapsing forcings whose support is bounded below $\lambda$. Each of these collapsing forcings also has cardinality less than $\lambda$. So any element of $\PP^\delta$ in $\tilde{V}[C_G]$ can be coded as a subset of $\lambda$. $\NM_\lambda$ is $\lambda$ distributive, so all the elements of $\PP^\delta$ in $\tilde{V}[C_G]$ actually already existed in $\tilde{V}$.
			
			So without loss of generality, we can choose $\dot{h}'$ to be a check name for some $h'\in \tilde{V}$. Now $h'$ is certainly bounded below $\lambda$, so without loss of generality we may assume that $c'$ is longer than the support of $h'$, that is, that $h'\in \col^{\tilde{V}}(c')=\col^{\tilde{V}[C_G]}(c')$. In fact, since $h'\in P^\delta$, we know $h'\in \col(c'\setminus \delta)$. In particular, since $h \in \col(c)$ and $\sup(c)=\delta$, we know $h\cup h'$ is a well defined element of $\col(c')$. Take $\gamma'\geq \gamma$ to be some ordinal which is larger than $\sup (c')$.
			
			Then $\langle c',h\cup h',\gamma'\rangle \in K_\lambda^\epsilon$ and $\langle c',h\cup h',\gamma'\rangle\leq^*\langle c,h,\gamma\rangle$. Using the same $\delta$ as above, we can see by construction that $c_p\cup c' \Vdash_{\NM_\lambda} (h\cup h') {\upharpoonleft} (c'\setminus \delta)=h'\in D^\delta$. Hence $(c',h\cup h',\gamma')$ is cooperative.
		\end{proof}
		
		So the set $\tilde{K}$ of all cooperative elements of $K_\lambda^\epsilon$ is in $U_\epsilon$. Since the set of all valid one step extensions of $p$ is also in $U_\epsilon$, there is a cooperative $\langle c,h,\gamma\rangle $ which is a valid way to extend $p$ by one step. Let $q$ be this one step extension of $p$. Since $\langle c,h,\gamma\rangle$ is cooperative, we can find $\delta\in \Succ(c)$ such that $c_p\cup c \Vdash_{\NM_\lambda} h{\upharpoonleft} (c\setminus \delta)\in D^\delta$. Let $\beta=\sup(c)$.
		
		Now, let $G$ be a $\QQ_\lambda$ generic filter with $q\in G$. Then $H_G{\upharpoonleft} [\delta,\beta]=h{\upharpoonleft} (c\setminus \delta)$, and $c_p\cup c \in C_G$. So $H_G{\upharpoonleft} [\delta,\beta]\in D^\delta$. Hence $q$ forces ``There are two ordinals $\delta<\beta$ in $C_G$, with $\delta$ a successor element of $C_G$, such that $H_G{\upharpoonleft} [\delta,\beta]\in D^\delta$''. The condition $p$ was arbitrary, so $\mathbbold{1}_{Q_\lambda}$ forces the statement, which is what we wanted to show.
		
		Now let $G$ be an arbitrary $\QQ_\lambda$ generic filter $G$, and work over $\tilde{V}[G]$. The statement is true of $\tilde{V}[G]$, so find $\delta<\beta$ that fit it. Let $G^*$ be a $\col^{\tilde{V}[G]}(C_G)$ generic filter containing $H_G$. Recall that we are aiming to show that $G^*\cap D=G^*\cap \dot{D}^{C_G}\neq \emptyset$. Let $h=H_G{\upharpoonleft} [\delta,\beta]$. So $h\in D^\delta$. Hence, $D_h$ is open and dense in $\PP_\delta=\col^{\tilde{V}[C_G]}(C_G\cap(\delta+1))$. Also, $h\in G^*$ since $H_G\leq h$.
		
		Now, $\QQ_\lambda$ adds no new bounded subsets of $\lambda$ by Lemma \ref{lemma facts about Qlambda}, so $\col^{\tilde{V}}(C_G\cap (\delta+1))=\col^{\tilde{V}[C_G]}(C_G\cap(\delta+1))=\col^{\tilde{V}[G]}(C_G\cap(\delta+1))$. So $D_h$ is open dense over the latter forcing. Since $G^*$ is generic over $\col^{\tilde{V}[G]}(C_G)$, the restriction of $G^*$ to $\col^{\tilde{V}[G]}(C_G\cap (\delta+1))$ is generic over that forcing. Hence $G^*\cap D_h\neq \emptyset$, so it contains some $h'$. But then $h'\cup h\in D$ by definition of $D_h$ (and hence also $h'\cup h\in \col^{\tilde{V}[C_G]}(C_G)$). Since $h',h\in G^*$ also $h'\cup h\in G^*$. So $G^{**}\cap D=G^*\cap \col^{\tilde{V}[C_G]}(C_G)\cap D$ contains $h'\cup h$ and is therefore nonempty.
		
		So fixing a $\QQ_\lambda$ generic $G$, we know that if $G^*$ is any $\col^{\tilde{V}[G]}(C_G)$ generic filter containing $H_G$ and $D$ is any $\col^{\tilde{V}[C_G]}(C_G)$ dense set in $\tilde{V}[C_G]$, then $G^{**} := G^*\cap \col^{\tilde{V}[C_G]}(C_G)$ meets $D$. Hence this $G^{**}$ is generic over $\tilde{V}[C_G]$, concluding the proof of the lemma.
	\end{proof}
	
	As a consequence of our earlier results about $\NM$, $\QQ$ and $\col$, we can also easily conclude that the generic extensions agree about $\Card$ and $\Reg_\beta$ for all $\beta<\alpha$
	
	\begin{lemma}\label{lemma Qlambda, NMlambda, Col don't collapse/singularise badly}
		Let $G$ be $\QQ_\lambda$ generic, let $G^*$ be $\col^{\tilde{V}[G]}_\lambda$ generic and contain $H_G$, and let $G^{**}$ be obtained from $G^*$ as in lemma \ref{lemma difficult magidor lemma}. The cardinals of $\tilde{V}[G][G^*]$ and $\tilde{V}[C_G][G^{**}]$ are both precisely the cardinals of $\tilde{V}$ which are not in the interval $(f(\gamma),\Succ_C(\gamma))$ for any $\gamma \in C$. Moreover, in both these generic extensions, for all $0<\beta<\alpha$, the class $\Reg_\beta$ in the generic extension is precisely $\Reg_\beta^{\tilde{V}}$ but with the intervals $(f(\gamma),\Succ_C(\gamma))$ omitted. In particular, $\Reg_\beta^{\tilde{V}}$ and $\Card$ agree up in $\tilde{V}[G][G^*]$ and $\tilde{V}[C_G][G^{**}]$, for all $\beta<\alpha$.
	\end{lemma}
	
	
	\subsection{Iterating the Q forcings}
	
	We are finally ready to put together these forcings, and define the overall forcing we're going to be using. We use a Prikry style iteration of $\QQ_\lambda$ forcings, followed by an $\NM_\kappa$ forcing and the corresponding $\col_\kappa$ forcing. This will give us a universe in which $\kappa$ is the first element of $\Reg^\alpha$, and is also $\LST(I,Q^\alpha)$. We now drop our discussions of $\tilde{V}$, and just work in the universe $V$ we fixed near the start of the proof. Recall that $V$ believes GCH.
	
	
	\begin{definition}
		We recursively define forcings $P_\gamma$ ($\gamma\leq \kappa)$ with two orders $\leq$ and $\leq^*$, and (for $\gamma<\kappa$) $P_\gamma$ names $\dot{Q}_\gamma$ for forcings, also with two orders $\leq$ and $\leq^*$, as follows.
		\begin{itemize}
			\item For $\gamma\leq\kappa$, the elements of $P_\gamma$ are sequences $\langle \tau_\epsilon\rangle_{\epsilon<\gamma}$ of length $\gamma$, of Easton support, such that for $\delta<\gamma$, the sequence $\langle \tau_\epsilon\rangle_{\epsilon<\delta}\in P_\delta$ and forces $\tau_\delta \in \dot{Q}_\delta$.

			\item $\langle \tau_\epsilon'\rangle_{\epsilon<\gamma}\leq \langle \tau_\epsilon\rangle_{\epsilon<\gamma}$ if:
			\begin{enumerate}
				\item For all $\delta<\gamma$, $\langle \tau_\epsilon'\rangle_{\epsilon<\delta}\Vdash \tau_\delta'\leq \tau_\delta$; and
				\item For all but finitely many $\delta$, either $\langle \tau_\epsilon\rangle_{\epsilon<\delta}\forces\tau_\delta\mathbbold{1}_{\dot{Q}_\delta}$, or we can replace $\leq$ with $\leq^*$ (in the sense of $\dot{Q}_\delta$) on the previous line.\label{Gitik iteration order step}
			\end{enumerate}
			\item $\langle \tau_\epsilon'\rangle_{\epsilon<\gamma}\leq^* \langle \tau_\epsilon\rangle_{\epsilon<\gamma}$ if in the above, \ref{Gitik iteration order step} holds for \textit{every} $\delta$.
			\item For any $\gamma<\kappa$, $\dot{Q}_\gamma$ is a $P_\gamma$ name for a forcing:
			\begin{enumerate}
				\item If $\gamma$ is $\gamma^+$ supercompact in $V$ and $P_\gamma$ forces: ``$\gamma$ is a cardinal which is $\gamma^+$ supercompact", then $\dot{Q}_\gamma$ is a name for the forcing $\QQ_\gamma$ we defined earlier.
				\item Otherwise, $\dot{Q}_\gamma$ is the canonical name for the trivial forcing.
			\end{enumerate}
		\end{itemize}
	\end{definition}

	\begin{lemma}\label{lemma Pkappa properties chain and cardinality}\cite[1.3]{gitik_changingCofinalities} Let $\gamma \leq \kappa$ be a Mahlo cardinal (of $V$). Then $P_\gamma$ has the $\gamma$ chain condition, and has cardinality $\leq\gamma$.
	\end{lemma}
	
	\begin{corollary}\label{corollary Pkappa properties general size}
		Let $\gamma \leq \kappa$. Then $\lvert P_\gamma \rvert \leq \gamma^{++}$.	
	\end{corollary}
	\begin{proof}
		Case 1: $\gamma$ is a Mahlo cardinal. Then $\lvert P_\gamma \rvert \leq\gamma$ by the previous lemma.
		
		Case 2: $\gamma$ is a limit of Mahlo cardinals. Let $(\epsilon_i)_{i<\cf(\gamma)} \subset \gamma$ be a sequence of Mahlo cardinals which is cofinal below $\mu$. By Lemma \ref{lemma Pkappa properties chain and cardinality}, for all $i$, $\lvert P_{\epsilon_i}\rvert \leq \epsilon_i$.	A condition of $P_\gamma$ can be expressed as a collection of conditions, one from each $P_{\epsilon_i}$, which all agree with each other. So
		$$\lvert P_\gamma\rvert \leq \prod_{i<\cf(\gamma)}\lvert P_{\epsilon_i}\rvert\leq \gamma^\gamma =\gamma^+$$
			
		Case 3: $\gamma$ is neither a Mahlo cardinal nor a limit of Mahlo cardinals. Let $\delta<\gamma=\sup(\text{Mahlo} \cap \gamma)$. Any $\lambda^+$ supercompact cardinal $\lambda$ is Mahlo, so we know that $\dot{Q}_\epsilon$ is trivial for all $\epsilon \in (\delta,\gamma)$. So
		$	\lvert P_\gamma\rvert \leq \lvert P_\delta * \dot{Q}_\delta\rvert \leq \delta^{++} < \gamma^{++}$.
	\end{proof}
	
	\begin{lemma}\label{lemma Pkappa properties decidability}\cite[1.4]{gitik_changingCofinalities}
		Let $\varphi$ be a statement (with parameters) and $p\in P_\gamma$. There is some $q\leq^* p$ which decides $\varphi$. The same is true of the forcing $P_\gamma/P_\delta$ if $\delta<\gamma$ is a Mahlo cardinal.
	\end{lemma}
	
	Here $P_\gamma/P_\delta$ is the ($P_\delta$ name for the) forcing defined in the usual way in a $P_\delta$ generic extension $V[G]$: we simply take $P_\gamma$ and delete all those conditions which are incompatible with an element of $G$.
	
	\begin{lemma}\label{lemma Pkappa properties quasi closedness}
		For any $\delta<\gamma\leq \kappa$, the forcing $P_\gamma/P_\delta$ is closed under $\leq^*$ sequences of length less than $\delta$.
	\end{lemma}

The proof uses an adaptation of standard techniques, and is left to the reader.
	
	\begin{corollary}\label{lemma Pkappa properties preservation of small cardinals}
		If $\delta<\gamma$ is a Mahlo cardinal, then the forcing $P_\gamma/P_\delta$ does not collapse or singularise any cardinals below $\delta$, or add any new bounded subsets of $\delta$.
	\end{corollary}
		
		
		
		
	
	\begin{corollary}\label{corollary Pkappa properties GCH}
		If $\gamma$ is a Mahlo cardinal, and $G$ is a $P_\gamma$ generic filter, then GCH holds in $V[G]$ except perhaps at the cardinals $\lambda$ for which $\dot{Q}_\lambda$ is nontrivial. (In other words, if $\lambda$ is \textit{not} $\lambda^+$ supercompact in $V$, or $\lambda\geq \gamma$, then $2^\lambda=\lambda^+$ in $V[G]$.)
	\end{corollary}
	\begin{proof}
		Let $\lambda$ be a cardinal of $V$ which is not $\lambda^+$ supercompact. We first examine the case $\lambda\geq \gamma$. By Lemma \ref{lemma Pkappa properties chain and cardinality} we know that if $\lambda>\gamma^+$ then $2^\lambda=\lambda^+$ in any $P_\gamma$ generic extension. Similarly, if $\lambda=\gamma$ or $\lambda=\gamma^+$, then by regularity we know that for all $\delta<\lambda, \lambda^\delta=\lambda$. So again by the same two lemmas, we know $P_\gamma$ preserves $2^\lambda$.
		
		Now suppose that $\lambda<\gamma$. Let $\mu\leq \lambda=\sup\{\delta\leq \lambda: \delta \text{ is }\delta^+\text{ supercompact}\}$ and let $\nu$ be the least $\delta>\lambda$ such that $\delta$ is $\delta^+$ supercompact. Then up to some trivial notation changes,
		$$P_\gamma=P_\mu * \dot{Q}_\mu * P_\gamma/P_\nu$$
		
		We have just seen that $P_\mu$ preserves $2^\lambda$. If $\dot{Q}_\mu$ is nontrivial, then $\mu$ is $\mu^+$ supercompact in $V$, and hence $\mu<\lambda$. If so, then $\dot{Q}_\mu$ is $\QQ_\mu$, which we saw in Lemma \ref{lemma facts about Qlambda} preserves $2^\lambda$. And finally, $P_\gamma/P_\nu$ adds no new bounded subsets of $\nu>\lambda$, so it doesn't change $\mathcal{P}(\lambda)$ at all. Hence $P_\gamma$ preserves $2^\lambda$.
	\end{proof}
	
	\begin{lemma}\label{lemma Pkappa properties preservation of supercompacts} If $\gamma\leq \kappa$ is $\gamma^+$ supercompact in $V$ then it is still $\gamma^+$ supercompact in the $P_\gamma$ generic extension.
	\end{lemma}
	\begin{proof}
		We adapt the argument of \cite[38]{magidorVaananan}, using Corollary \ref{corollary Pkappa properties GCH} in place of the GCH to show the enumeration of $P_\gamma$ terms is of length $\gamma^{++}$.

	\end{proof}
	
	\begin{corollary}\label{lemma Pkappa properties nontriviality of Q}
		For $\gamma<\kappa$, $\dot{Q}_\gamma$ is nontrivial if any only if $\gamma$ is $\gamma^+$ supercompact in $V$.
	\end{corollary}
	
	\begin{lemma}\label{lemma Pkappa properties preservation of f}
		Let $\gamma\leq \kappa$ and let $G$ be a $P_\gamma$ generic filter. Suppose that $\delta<\gamma$ is in $\Reg_\alpha$ from the perspective of $V[G]$. Then $V[G]$ believes that $(\delta,f(\delta))$ contains unboundedly many elements of $\Reg_\beta$ for all $\beta<\alpha$, but no elements of $\Reg_\alpha$. Also, $V[G]$ does not believe that $f(\delta)$ is in $\Reg_\alpha$.
	\end{lemma}
	\begin{proof}
		Since the interval contains no elements of $\Reg_\alpha$ even in $V$ by definition of $f$, it is immediate that it contains none in $V[G]$ either. Similarly, $f(\delta)\not \in \Reg_\alpha^{V[G]}$. Let $\mu\leq \delta$ be be the supremum of the class of $\lambda^+$ supercompacts $\lambda$ which are $\leq \delta$. Let $\nu>\delta$ be the least $\lambda>\delta$ which is $\lambda^+$ supercompact. So $P_\gamma=P_\mu * \dot{Q}_\mu * (P_\gamma / P_\nu)$.
		
		It follows trivially from Corollary \ref{corollary Pkappa properties general size} that $P_\mu$ satisfies the $\mu^{++}$ chain condition, and therefore does not collapse or singularise any cardinals $\geq \mu^{++}$. But we know that $f(\delta)$ is a limit of elements of $\Reg_0=\Succ$, so $f(\delta)$ is a limit cardinal. Hence, $\mu^{++}\leq \delta^{++}<f(\delta)$.
		
		$\dot{Q}_\mu$ is either trivial, or is of the form $\QQ_\mu$, which we know by Lemma \ref{lemma facts about Qlambda} does not collapse or singularise any cardinals other than $\mu$.
		
		$\nu$ is a successor element of $X$, and hence is $\nu^+$ supercompact in $V$. In particular, then, it is Mahlo. So by Corollary \ref{lemma Pkappa properties preservation of small cardinals}, $P_\gamma / P_\nu$ does not collapse or singularise any cardinals below $\nu$. But since $\nu>\delta>\alpha$, and (again) $\nu$ is Mahlo in $V$, there are certainly elements of $\Reg_\alpha$ in the interval $(\delta,\nu)$. It follows that $\nu>f(\delta)$.
		
		Putting these facts together, we find that $P_\gamma$ does not collapse or singularise any cardinals in the interval $((\delta^{++})^V,f(\delta))$, and hence that in $V[G]$ there are unboundedly many elements of $\Reg_\beta$ for all $\beta<\alpha$.
	\end{proof}

	\subsection{The LST number}
	
	So now we know that $P_\kappa$ is a well-defined forcing which combines a $\QQ_\lambda$ on every $\lambda^+$ supercompact $\lambda<\kappa$. Also, $\kappa$ is still $\kappa^+$ supercompact and hence Mahlo in the generic extension, so the following forcing, the one we will use to get the model of ``$\LST(I,Q^\alpha)=\LST(I,R^\alpha)=\min (\Reg_\alpha)$'', is well-defined:
	
	\begin{definition}
		$$\PP:=P_\kappa * \NM_\kappa * \col_\kappa$$
	\end{definition}
	
	Now, let us fix  a $\PP$ generic extension $V^{**}$ of $V$. Let the corresponding generic filter be $\tilde{G}*C*G^{**}$. So $\tilde{G}$ is a $P_\kappa$ generic filter, and $C$ is an $\NM_\kappa$ generic club over $V[\tilde{G}]$, and $G^{**}$ is a $\col_\kappa$ generic extension over $V[\tilde{G}][C]$.
	
	
	To tidy up our notation, let us write $\tilde{V}$ for $V[\tilde{G}]$. By Corollary \ref{corollary basic facts about NM} and Lemma \ref{lemma basic facts about col} we know that the cardinals of $V^{**}$ below $\kappa$ are precisely the cardinals of $\tilde{V}$ which are either below $\min(C)=\omega$ or are in some interval $[\gamma,f(\gamma)]$ for some $\gamma \in C$. By Lemma \ref{lemma Pkappa properties preservation of f} it follows that in $V^{**}$ there are unboundedly many elements of $\Reg_\beta$ below $\kappa=\sup(C)$ for all $\beta<\alpha$, but no elements of $\Reg_\alpha$.
	
	Also, $\kappa$ is regular (since it was Mahlo in $\tilde{V}$ and neither $\NM_\kappa$ nor $\col_\kappa$ singularise it). So $\kappa=\min (\Reg_\alpha^{V^{**}})$. Our goal now is to show that in $V^{**}$,
	$$\LST(I,Q^\alpha) =\LST(I,R^\alpha) = \kappa$$
	
	By Theorem \ref{theorem arranging LST numbers} and \ref{theorem LST R minima}, we know that $\LST(I,Q^\alpha)\geq \LST(I,R^\alpha)\geq \kappa$. (Recall that $\alpha$ is countable, so there are certainly no hyperinaccessibles below it.) So it suffices to show that $\LST(I,Q^\alpha)\leq \kappa$.
	
	Let $\mathcal{A}\in V^{**}$ be a structure in some first order language $\mathcal{L}$ for cardinality less than $\kappa$. Without loss of generality, let us assume that the domain of $\mathcal{A}$ is some $\mu > \kappa^{++}$. We want to show there is an $\mathcal{L}\cup \{I,Q^\alpha\}$ elementary substructure of $\mathcal{A}$ of cardinality less than $\kappa$.
	
	By Lemma \ref{lemma defining f}, we can find a model $M$ with $M^{\mu}\subset M$ (from the perspective of $V$) and an elementary embedding $j:V\rightarrow M$ with critical point $\kappa$, such that $j(\kappa)>\mu$ and $j(f)(\kappa)>\mu$. We now want to use Lemma \ref{lemma j and j*} to extend $j$ to an elementary embedding $j^*$ of $V^{**}$ into some $j(\PP)$ generic extension of $M$. The first step to doing this is to establish what $j(\PP)$ looks like.
	
	Since $M^\mu \subset M$, we know that below $\mu$, $M$ and $V$ agree on the cardinals and their cofinalities. In particular, there is no $\lambda \in (\kappa, \mu)$ which is $\lambda^+$ supercompact in $M$. On the other hand, $\kappa$ itself is still $\kappa^+$ supercompact in $M$, by \cite{kanamori}[22.7,22.11]. So
	$$j(\PP)= P_\kappa * \QQ_\kappa * (P_{j(\kappa)}/P_\nu) * \NM_{j(\kappa)} * \col_{j(\kappa)}$$
	
	where $j(\kappa) \geq\nu>\mu$ is the least cardinal above $\mu$ such that $M$ believes $\nu$ is $\nu^+$ supercompact.
	
	In order to fulfil the criteria of Lemma \ref{lemma j and j*}, we need to find a $j(\PP)$ generic filter which extends $\tilde{G}*C*G^{**}$. 

	\begin{lemma}\label{lemma G,G* exist}
		There exists a $(\QQ_\kappa)^{\tilde{V}}$ generic filter $G$ and a $\col^{\tilde{V}[G]}_\kappa$ generic filter $G^*$ such that:
		\begin{enumerate}
			\item $C_G=C$
			\item $H_G\in G^*$
			\item $G^{**}=G^*\cap\col^{\tilde{V}[C]}_\kappa$
		\end{enumerate}
	\end{lemma}
	\begin{proof}
		Exactly the same argument as in \cite[40]{magidorVaananan}. (Note that the proof there precedes the statement of the lemma.)
	\end{proof}
	
	Now, let us define the $j(\PP)$ filter $G^M=G_1 * G_2 * G_3 * G_4 * G_5$ as follows:
	
	
	
	\begin{itemize}
		\item $G_1:=\tilde{G}$;
		\item $G_2:=G$;
		\item $G_3$ is some arbitrary $P_{j(\kappa)}/P_\nu$ generic filter over $V[G_1*G_2]$;
		\item $G_4$ is an $\NM_\kappa$ generic club extending $C=C_G$. This is possible since $\kappa$ is singular in $V[G_1*G_2*G_3]$, and $C\in \tilde{V}[C]\subset V[G_1*G_2]$, and neither $\QQ_\kappa$ nor $P_{j(\kappa)}/P_\nu$ collapse or singularise any cardinals below $\kappa$, so $C$ is a condition of $\NM_{j(\kappa)}$ in $V[G_1*G_2*G_3]$.
	\end{itemize}
	
	Defining $G_5$ is a little more complicated. We know that $C=G_4 \cap \kappa$ is an initial segment of the generic club $G_4$, so we can break down
	\begin{align*}\col^{V[G_1 * \ldots * G_4]}_{j(\kappa)}&=\col^{V[G_1 * \ldots * G_4]}(G_4)\\
		&= \col^{V[G_1 * \ldots * G_4]}(C) * \col^{V[G_1 * \ldots * G_4]}(G_4\setminus \kappa)
	\end{align*}
	
	We know that $P_{j(\kappa)}/P_\nu$ and $\NM_{j(\kappa)}$ do not collapse or singularise any cardinals below $\kappa$, or otherwise add any new elements or subsets of $\col(C)$. So
	$$\col^{V[G_1*\ldots*G_4]}(C)=\col^{V[G_1*G_2]}(C)=\col^{\tilde{V}[G]}_\kappa$$.
	Moreover, this forcing has the same dense subsets in $\tilde{V}[G]$ and $V[G_1*\ldots*G_4]$. So the filter $G^*$, defined in Lemma \ref{lemma G,G* exist}, is $\col^{V[G_1*\ldots*G_4]}(C)$ generic over $V[G_1*\ldots*G_4]$.
	
	Now, let $G'$ be some arbitrary $\col^{V[G_1*\ldots*G_4]}(G_4\setminus \kappa)$ generic filter over $V[G_1*\ldots*G_4][G^*]$.
	
	Finally, we define $G_5:= G^* * G'$, and
	
	$$G^M:= G_1*G_2*G_3*G_4*G_5$$
	
	We now verify that $G^M$ satisfies the requirements for Lemma \ref{lemma j and j*}.
	
	\begin{claim}
		Let $p\in \tilde{G} * C * G^{**}$. Then $j(p)\in G^M$.
	\end{claim}
	\begin{proof}
		Let us write $p=(q,c,h)$. (Formally, $c$ is a \textit{name for} an element of $\NM_\kappa$, etc. In order to reduce notation, we will write $c$ for both this name and its interpretation in $V[\tilde{G}]$, and similarly for $h$.) To verify that $j(p) \in G^M$, it suffices to check that $j(q)\in \tilde{G}$, that $j(c) \subset C^M$ and that $j(h) \in G^* * G'$. (Again, strictly speaking we mean that $j(c)$ is a name whose interpretation in $V[G_1*G_2*G_3]$ is an element of $C^M$, and give a similar statement for $h$.)
		
		$q$ is easy: $j(P_\kappa)=P_\kappa$, so $j(q)=q\in \tilde{G}$ by assumption.
		
		Similarly, $j(c)=c\in \NM_{j(\kappa)}$. By assumption $c\subset C$, and by definition of $C^M$, we know $C\subset C^M$.
		
		Once again, $j(t)=t$. By assumption, $t\in G^{**}$, and so since $G^{**} = G^* \cap \tilde{V}[C]$, we know $t\in G^*$.
	\end{proof}
	
	Hence, by Lemma \ref{lemma j and j*} we can extend $j$ to an elementary embedding $j^*:V^{**}\rightarrow M[G^M]$.

Recall that the domain of $\mathcal{A}$ is $\mu$, and note that $j {\upharpoonleft} \mu= j^* {\upharpoonleft} \mu$. Since $\mu \subset V$ and $M$ is closed under $\mu$ sequences, we know that $\text{range}(j{\upharpoonleft} \mu)\in M$ and hence $\text{range}(j^*{\upharpoonleft} \mu) \in M[G^M]$. Also, since the critical point of $j^*$ is $\kappa$ and $\lvert \mathcal{L}\rvert<\kappa$, we also know that $j^*(\mathcal{L})=\mathcal{L}$. Hence, $j^*(\mathcal{A})$ is an $\mathcal{L}$ structure in $M[G^M]$, which is elementarily equivalent to $\mathcal{A}$ in $V^{**}$ (even in the language $\mathcal{L}\cup \{I,Q^\epsilon,R^\epsilon\}$ since $j^*$ is fully elementary). Let $\mathcal{B}$ be the substructure of $j^*(\mathcal{A})$ whose domain is $\text{range}(j^* {\upharpoonleft} \mu)$. Note that $\mathcal{B}\in M[G^M]$, and since $j^*$ is elementary and $\mathcal{A}$ has domain $\mu$, we can easily see that $\mathcal{B}$ is isomorphic to $\mathcal{A}$. Also, $M[G^M]$ believes that $\mathcal{B}$ has cardinality less than $j^*(\kappa)$.

By construction, $M[G^M]$ and $V^{**}$ have the same cardinals and regulars below $\kappa$. Also, since $\PP$ and $j(\PP)$ do not collapse or singularise any cardinals in the interval $(\kappa,\mu]$, and the cardinals and cofinalities of $V$ and $M$ agree up to $\nu$, $M[G^M]$ and $V^{**}$ have the same cardinals and regulars in that interval. So in fact, the two models agree completely on the cardinals and regulars $\leq \mu$, except that $M[G^M]$ thinks that $\kappa$ has cofinality $\omega$ and $V^{**}$ thinks it is in $\Reg^\alpha$.

In particular, this means that $I$ and $Q^\alpha$ are interpreted the same way about subsets of $\mathcal{A}$ in $V^{**}$ and the $\mathcal{L}$ isomorphic structure $\mathcal{B}$ in $M[G^M]$. The only case which is nontrivial is when we are using $Q^\alpha$ to compare two linear orders, one or both of which have cofinality $\kappa$ in $V^{**}$ and therefore $\omega$ in $M[G^M]$. In that case, since $Q^\alpha$ only tells us about cofinalities in $\Reg_{<\alpha}$, we know that in $V^{**}$ it will always be false regardless of whether the two linear orders actually have the same cofinality, and regardless of the third (auxiliary) formula we choose. Likewise, we know that $\omega \not \in \Reg_{<\alpha}$, so $Q^\alpha$ will also be false in $M[G^M]$. So the map $j^* {\upharpoonleft} \mu: \mathcal{A}^{V^{**}} \rightarrow \mathcal{B}^{M[G^M]}$ is an $\mathcal{L}\cup\{I,Q^\epsilon\}$ isomorphism. But we also know that $j^*{\upharpoonleft}\mathcal{A} : \mathcal{A}^{V^{**}} \rightarrow j^*(\mathcal{A})^{M[G^M]}$ is an $\mathcal{L}\cup\{I,Q^\epsilon)$ elementary embedding, since $j^*: V^{**}\rightarrow M[G^M]$ is fully elementary. So $M[G^M]$ believes that $\mathcal{B}$ is an $L(I,Q^\epsilon)$ elementary substructure of $j^*(\mathcal{A})$.
	
	But now we know $M[G^M]$ believes 
	
	\begin{quotation}
		``$j^*(\mathcal{A})$ contains an $\mathcal{L}\cup\{I,Q^\alpha\}$ elementary substructure of cardinality less than $j^*(\kappa)$''.
	\end{quotation}
	
	Since $j^*:V^{**}\rightarrow M[G^M]$ is elementary, $j^*(\mathcal{L})=\mathcal{L}$ and $j^*(\alpha)=\alpha$, it follows that $V^{**}$ believes
	
	\begin{quotation}
		``$\mathcal{A}$ contains an $\mathcal{L}\cup\{I,Q^\alpha\}$ elementary substructure of cardinality less than $\kappa$''.
	\end{quotation}
	
	But $\mathcal{A}$ was an arbitrary structure in $V^{**}$, so $\LST(I,Q^\alpha)\leq \kappa$ in $V^{**}$. As we've already seen, $\LST(I,Q^\alpha)\geq \LST(I,R^\alpha)\geq \kappa$, so $\LST(I,Q^\alpha)=\LST(I,R^\alpha)=\kappa$.
\end{proof}

\section{Open Questions}

The results in this paper have shown that $\LST(I,Q^\alpha)$ and $\LST(I,R^\alpha)$ can be the least element of $\Reg_\alpha$. Could they be other elements too?

\begin{question}
	Let $\gamma>0$ and $\alpha>0$ be ordinals. Granting the consistency of a supercompact, is it consistent that $\LST(I,Q^\alpha)$ and $\LST(I,R^\alpha)$ are the $\gamma$'th element of $\Reg_\alpha$?
\end{question}

\begin{question}
	Let $\alpha>0$ be an ordinal. Assuming sufficient large cardinal hypotheses, is it consistent that $\LST(I,Q^\alpha)$ or $\LST(I,R^\alpha)$ are elements of $\Reg_\alpha$, but not its largest element? How about that $\Reg_\alpha$ is unbounded?
\end{question}

Another natural question to ask is whether we can separate $\LST(I,Q^\alpha)$ from $\LST(I,R^\alpha)$.

\begin{question}
	For $\alpha>0$, is it consistent that $\LST(I,Q^\alpha)>\LST(I,R^\alpha)=\min(\Reg_\alpha)$?
\end{question}



	

\bibliographystyle{plain}
\bibliography{references}

\end{document}